\newtheorem{theorem}{Theorem}[section]
\newtheorem{definition}[theorem]{Definition}
\newtheorem{lemma}[theorem]{Lemma}
\newtheorem{corollary}[theorem]{Corollary}
\theoremstyle{definition}
\newtheorem{example}[theorem]{Example}
\newcommand{\perm}[1]{\mbox{Perm}(#1)}
\newcommand{\Hol}[1]{\mbox{Hol}(#1)}
\newcommand{\Aut}[1]{\mbox{Aut}(#1)}
\newcommand{\Gal}[1]{\mbox{Gal}(#1)}
\begin{document}

\def\keywords#1{\def\@keywords{#1}}

\title[Isomorphism problems for Hopf algebras]{Isomorphism problems for Hopf-Galois structures on separable field extensions}

\author{Alan Koch}
\address{Department of Mathematics, Agnes Scott College, 141 E. College Ave., Decatur, GA 30030 USA}
\email{akoch@agnesscott.edu}
\author{Timothy Kohl}
\address{Department of Mathematics and Statistics, Boston University, 111 Cummington Mall, Boston, MA 02215 USA}
\email{tkohl@math.bu.edu} 
\author{Paul J.~Truman}
\address{School of Computing and Mathematics, Keele University, Staffordshire, ST5 5BG, UK}
\email{P.J.Truman@Keele.ac.uk}
\author{Robert Underwood}
\address{Department of Mathematics and Computer Science, Auburn University at Montgomery, Montgomery, AL, 36124 USA}
\email{runderwo@aum.edu}


\maketitle

\begin{abstract}
Let $ L/K $ be a finite separable extension of fields whose Galois closure $ E/K $ has group $ G $. Greither and Pareigis have used Galois descent to show that a Hopf algebra giving a Hopf-Galois structure on $ L/K $ has the form $ E[N]^{G} $ for some group $ N $ such that $ |N|=[L:K] $. We formulate criteria for two such Hopf algebras to be isomorphic as Hopf algebras, and provide a variety of examples. In the case that the Hopf algebras in question are commutative, we also determine criteria for them to be isomorphic as $ K $-algebras. By applying our results, we complete a detailed analysis of the distinct Hopf algebras and $ K $-algebras that appear in the classification of Hopf-Galois structures on a cyclic extension of degree $ p^{n} $, for $ p $ an odd prime number. 
\end{abstract}
\noindent {\it key words:} Hopf-Galois extension, Greither-Pareigis theory, Galois descent\par
\noindent {\it MSC:} 16T05\par

\section{Introduction} \label{section_introduction}

Let $ L/K $ be a finite extension of fields and $ H $ a $ K $-Hopf algebra. We say that $ L $ is an {\em $ H $-Galois extension of $ K $}, or that $ H $ gives a {\em Hopf-Galois structure} on $ L/K $, if $ L $ is an $ H $-module algebra and the obvious $ K $-linear map $ L \otimes_{K} H \rightarrow \mathrm{End}_{K}(L) $ is bijective. For example, if $ L/K $ is a Galois extension with group $ G $ then the group algebra $ K[G] $, with action induced from the usual action of $ G $ on $ L $, gives a Hopf-Galois structure on $ L/K $. We say that two Hopf algebras $ H_{1}, H_{2} $ give {\em isomorphic} Hopf-Galois structures on a finite extension $ L/K $ if there is an isomorphism of $ K $-Hopf algebras $ \varphi : H_{1} \rightarrow H_{2} $ such that $ h x = \varphi(h) x $ for all $ h \in H_{1} $ and $ x \in L $. Note that it is possible for two distinct Hopf-Galois structures on $ L/K $ to have underlying Hopf algebras which are isomorphic as $ K $-Hopf algebras or as $ K $-algebras; equivalently, one might view this as multiple actions of a single $ K $-Hopf algebra or $ K $-algebra on $ L $. In this paper we study this phenomenon. 
\\ \\
If $ L/K $ is purely inseparable, it is known that a single Hopf algebra can act in an infinite number of ways: see e.g. \cite{Koch2014}. We shall therefore suppose that $ L/K $ is separable. In this case Greither and Pareigis \cite{GreitherPareigis1987} have classified the (isomorphism classes of) Hopf-Galois structures admitted by $ L/K $. In order to state this classification we require some notation. Let $ E $ be the Galois closure of $ L/K $, $ G = \Gal{E/K} $, and $ G_{L} = \Gal{E/L} $. Let $ X $ denote the left coset space $ G/G_{L} $, and define a homomorphism $ \lambda : G \rightarrow \perm{X} $ by $ \lambda(\sigma)(\overline{\tau}) = \overline{\sigma \tau} $, where $ \overline{\tau} $ denotes the coset $ \tau G_{L} \in X $. The theorem of Greither and Pareigis asserts that there is a bijection between Hopf-Galois structures on $ L/K $ and  subgroups $ N $ of $ \perm{X} $ which are regular (that is, having the same size as $ X $ and acting transitively on $ X $) and normalized by $ \lambda(G) $ (that is, stable under the action of $ G $ on $ \perm{X} $ defined by $ \sigma \ast \eta = \lambda(\sigma) \eta \lambda(\sigma)^{-1} $). The enumeration of the Hopf-Galois structures admitted by $ L/K $ is therefore equivalent to the enumeration of subgroups of $ \perm{X} $ with these properties. If $ |X| $ is large then this is a difficult problem, but Byott's translation theorem \cite{Byott1996} provides a useful simplification. Loosely, for each abstract group $ N $ of order $ |X| $, Byott's theorem relates the number of $ G $-stable regular subgroups of $ \perm{X} $ that are isomorphic to $ N $ to the number of subgroups of the holomorph of $ N $ that are isomorphic to $ G $. Since $ \Hol{N} \cong N \rtimes \Aut{N} $, this group is much smaller than $ \perm{X} $. We give a more precise statement of Byott's theorem in subsection \ref{subsection_Hopf_algebra_isomorphisms_holomorph} below. 
\\ \\
The theorem of Greither and Pareigis also asserts that the Hopf algebra appearing in the Hopf-Galois structure corresponding to the $ G $-stable regular subgroup $ N $ of $ \perm{X} $ is $ E[N]^{G} $, the fixed points of the group algebra $ E[N] $ under the simultaneous action of $ G $ on $ E $ as Galois automorphisms and on $ N $ by the action $ \ast $. We will refer to the isomorphism class of $ N $ as the {\em type} of Hopf-Galois structure given by $ E[N]^{G} $. The proof that $ E[N]^{G} $ is indeed a $ K $-Hopf algebra, and the proof that it gives a Hopf-Galois structure on $ L/K $, are accomplished via Galois descent. We review this theory briefly since it will feature in many of our arguments. 
\\ \\
The field $ E $ is a finite dimensional $ K $-vector space and so, by Morita theory (see, e.g. \cite[\S 3D]{Curtis_Reiner_I}), the functor $ E \otimes_{K} - $ is an equivalence between the category of $ K $-vector spaces and the category of $ R = \mathrm{End}_{K}(E) $-modules, with inverse $ \mathrm{Hom}_{R}(E,-) $. But since $ E $ is a Galois extension of $ K $ with group $ G $, we may identify $ R $ with the skew group ring $ E \circ G $ \cite{ChaseHarrisonRosenberg1965}, and so an $ R $-module is simply an $ E $-vector space with a compatible $ G $-action. In this case the functor  $ \mathrm{Hom}_{R}(E,-) $ is naturally isomorphic to the fixed point functor $ (-)^{G} $, and so homomorphisms of $ E $-vector spaces descend to homomorphisms of $ K $-vector spaces if and only if they respect the $ G $-action. Applying this to the structure maps defining an $ E $-algebra (resp. an $ E $-Hopf algebra), we obtain criteria for us to descend to a $ K $-algebra (resp. $ K $-Hopf algebra). In the case of the objects appearing in the theorem of Greither and Pareigis, the group algebra $ E[N] $ is certainly an $ E $-Hopf algebra, and we may verify that the Hopf algebra structure maps do respect the action of $ G $ on $ E[N] $ defined above. We therefore obtain that $ E[N]^{G} $ is indeed a $ K $-Hopf algebra. Since the inverse to the fixed point functor is the base change functor, we also obtain that $ E \otimes_{K} E[N]^{G} = E[N] $.
\\ \\
Now suppose that $ N_{1}, N_{2} $ are $ G $-stable regular subgroups of $ \perm{X} $, so that $ H_{1}=E[N_{1}]^{G} $ and $ H_{2}=E[N_{2}]^{G} $ are two Hopf algebras giving Hopf-Galois structures on $ L/K $. In section \ref{section_Hopf_algebra_isomorphisms} we seek criteria for $ H_{1} \cong H_{2} $ as $ K $-Hopf algebras. Since $ E \otimes_{K} H_{i} = E[N_{i}] $ for each $ i $, a necessary condition for $ H_{1} \cong H_{2} $ as $ K $-Hopf algebras is that $ E[N_{1}] \cong E[N_{2}] $ as $ E $-Hopf algebras, which occurs if and only if $ N_{1} \cong N_{2} $ as groups. However, this condition is not sufficient: for this, we require an isomorphism $ N_{1} \xrightarrow{\sim} N_{2} $ that respects the action of $ G $ on each of these groups. This idea appears in, for example, \cite{Childs2013} and \cite{Crespo2015}. We prove a more general result of this type (Theorem \ref{theorem_Hopf_algebra_isomorphism}), and provide a variety of examples. We show that it is possible to detect $ K $-Hopf algebra isomorphisms by studying properties of $ \Hol{N_{1}} $ and $ \Hol{N_{2}} $ (Theorem \ref{theorem_Hopf_algebra_isomorphism_holomorph}), and also determine a criterion for $ F \otimes_{K} H_{1} $ and $ F \otimes_{K} H_{2} $ to be isomorphic as $ F $-Hopf algebras, for $ F $ some extension of $ K $ contained in $ E $ (Theorem \ref{theorem_Hopf_algebra_isomorphism_base_change}). 
\\ \\
In section \ref{section_algebra_isomorphisms_commutative} we assume that $ K $ has characteristic zero and that $ H_{1} $ and $ H_{2} $ are commutative (equivalently, that $ N_{1}, N_{2} $ are abelian groups). We determine a criterion, in terms of the dual groups $ \widehat{N_{1}} $ and $ \widehat{N_{2}} $, for $ H_{1} \cong H_{2} $ as $ K $-algebras  (Theorem \ref{theorem_algebra_isomorphism}). If $ N_{1} \cong N_{2} $, we show that it is also possible to detect such isomorphisms by studying $ \Hol{N_{1}} $ and $ \Hol{N_{2}} $ (Theorem \ref{theorem_algebra_isomorphism_holomorph}). We show that these results have a particularly simple form in the case that $ N_{1} $ and $ N_{2} $ are both cyclic of order $ n $ and $ K $ contains a primitive $ n^{th} $ root of unity (Theorem \ref{corollary_algebra_isomorphism_cyclic}). Finally, in section \ref{section_cyclic_p_power_extensions} we apply the results of the preceding sections to give a detailed analysis of the Hopf-Galois structures admitted by a cyclic extension $ L/K $ of odd prime power degree. We show that the Hopf algebras that appear are pairwise nonisomorphic as Hopf algebras (Theorem \ref{theorem_cyclic_p_power_Hopf_algebra_isomorphisms}) and, under the assumption that $ K $ has characteristic zero and contains a primitive $ p^{n} $-root of unity,  determine the $ K $-algebra isomorphism classes (Theorem \ref{theorem_cyclic_p_power_wedderburn}). 

\section{Hopf Algebra Isomorphisms} \label{section_Hopf_algebra_isomorphisms}

In this section we address the question of when two Hopf algebras giving Hopf-Galois structures on a finite separable extension of fields are isomorphic as Hopf algebras. We achieve this by exploiting that fact that, by the theorem of Greither and Pareigis, such a Hopf algebra is the fixed points of a group algebra under the action of a certain group of automorphisms. Rather than focusing specifically on this situation, we make our definitions, and formulate our first theorem, in rather more general terms:

\begin{definition}
Let $ G $ be a group and let $ (N_{1},\ast_{1}), (N_{2},\ast_{2}) $ be $ G $-sets (where, for $ =1,2 $, $ \ast_{i} $ denotes the action of $ G $ on $ N_{i} $). We say that $ (N_{1},\ast_{1}),  (N_{2},\ast_{2}) $ (or just $ N_{1},N_{2} $) are {\em isomorphic as $ G $-sets} if there is a $ G $-equivariant bijection $ f : N_{1} \rightarrow N_{2} $. We say that a $ G $-set $ N $ is a {\em $ G $-group} if it is a group on which $ G $ acts via automorphisms, and that two $ G $-groups $ N_{1}, N_{2} $ are {\em isomorphic as $ G $-groups} if there is a $ G $-equivariant isomorphism $ f : N_{1} \xrightarrow{\sim} N_{2} $. 
\end{definition}

We note that if $ \varphi: N_{1} \xrightarrow{\sim} N_{2} $ is an isomorphism of $ G $-sets and $ \eta_{1}, \ldots ,\eta_{r} $ are representatives for the orbits of $ G $ in $ N_{1} $ then $ \varphi(\eta_{1}), \ldots ,\varphi(\eta_{r}) $ are representatives for the orbits of $ G $ in $ N_{2} $, and $ \mbox{Stab}(\eta_{i}) = \mbox{Stab}(\varphi(\eta_{i})) $ for each $ i $. We use these definitions to formulate the following general theorem:

\begin{theorem} \label{theorem_Hopf_algebra_isomorphism}
Let $ E/K $ be a Galois extension of fields, let $ G $ be a subgroup of $ \Gal{E/K} $, and let $ F=E^{G} $. Let $ (N_{1}, \ast_{1}) $ and $ (N_{2}, \ast_{2}) $ be $ G $-groups and, for $ i=1,2 $, let $ G $ act on $ E[N_{i}] $ by acting on $ E $ as Galois automorphisms and on $ N_{i} $ via $ \ast_{i} $. Then $ E[N_{1}]^{G} \cong  E[N_{2}]^{G} $ as $ F $-Hopf algebras if and only if $ N_{1} \cong N_{2} $ as $ G $-groups.  
\end{theorem}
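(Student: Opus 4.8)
The plan is to exploit the Galois descent equivalence reviewed in the introduction, applied with the base field $F$ in place of $K$. By the fundamental theorem of Galois theory $E/F$ is a Galois extension with group $G=\Gal{E/F}$, so the fixed-point functor $(-)^{G}$ is an equivalence between the category of $E$-vector spaces carrying a compatible semilinear $G$-action and the category of $F$-vector spaces, with inverse the base-change functor $E\otimes_{F}-$. In particular $E\otimes_{F}E[N_{i}]^{G}=E[N_{i}]$ for $i=1,2$, and an $F$-linear map of fixed points base-changes to (equivalently, descends from) an $E$-linear $G$-equivariant map of the ambient group algebras. Since each $E[N_{i}]$ is an $E$-Hopf algebra whose structure maps respect the $G$-action, this equivalence identifies $F$-Hopf algebra homomorphisms $E[N_{1}]^{G}\to E[N_{2}]^{G}$ with $G$-equivariant $E$-Hopf algebra homomorphisms $E[N_{1}]\to E[N_{2}]$.

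For the reverse implication, suppose $f:N_{1}\xrightarrow{\sim}N_{2}$ is an isomorphism of $G$-groups. Extending $f$ $E$-linearly gives an isomorphism of $E$-Hopf algebras $E[N_{1}]\to E[N_{2}]$, because $f$ is a group isomorphism and hence preserves the comultiplication, counit, and antipode determined by the grouplike basis. The $G$-equivariance of $f$, namely $f(\sigma\ast_{1}\eta)=\sigma\ast_{2}f(\eta)$, is precisely the condition needed for this extension to commute with the semilinear $G$-actions $\sigma(\sum a_{\eta}\eta)=\sum\sigma(a_{\eta})(\sigma\ast_{i}\eta)$. Applying $(-)^{G}$ then yields the desired isomorphism $E[N_{1}]^{G}\cong E[N_{2}]^{G}$ of $F$-Hopf algebras.

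For the forward implication, suppose $\Phi:E[N_{1}]^{G}\to E[N_{2}]^{G}$ is an isomorphism of $F$-Hopf algebras. Base-changing along $E\otimes_{F}-$ produces a $G$-equivariant isomorphism of $E$-Hopf algebras $\widetilde{\Phi}:E[N_{1}]\to E[N_{2}]$. The key observation is that the grouplike elements of a group algebra $E[N]$ are exactly the elements of $N$; since any Hopf algebra isomorphism carries grouplikes to grouplikes, $\widetilde{\Phi}$ restricts to a bijection $f:N_{1}\to N_{2}$. As $\widetilde{\Phi}$ is multiplicative and each $N_{i}$ is a group under the algebra multiplication, $f$ is a group isomorphism, and the $G$-equivariance of $\widetilde{\Phi}$, together with the fact that each $\sigma\in G$ permutes the grouplike set $N_{i}$, forces $f$ to be $G$-equivariant. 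Thus $N_{1}\cong N_{2}$ as $G$-groups.

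The routine parts are the linear extension in the reverse direction and the bookkeeping of the semilinear $G$-action under base change. The conceptual crux is the identification of the grouplike elements of $E[N]$ with $N$: this is what allows a Hopf algebra isomorphism of group algebras to be recovered from, and reduced to, a genuine group isomorphism. It is precisely this step that fails for algebra (as opposed to Hopf algebra) isomorphisms, which explains why the commutative $K$-algebra question treated later requires the different, character-theoretic techniques.
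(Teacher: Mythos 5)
Your proof is correct and takes essentially the same route as the paper: one direction extends a $G$-equivariant group isomorphism $E$-linearly and descends, while the other base-changes the $F$-Hopf algebra isomorphism to $E$, restricts to the grouplike elements (which are exactly $N_{i}$), and checks $G$-equivariance. The only difference is cosmetic: where you invoke the descent equivalence to conclude that the base-changed map is automatically $G$-equivariant, the paper verifies this by an explicit computation, writing $\eta\in N_{1}$ in terms of an $F$-basis of $E[N_{1}]^{G}$ and using that the basis elements are $G$-fixed.
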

\begin{proof}
Note that $ E/F $ is a Galois extension of fields with group $ G $. If there is a $ G $-equivariant isomorphism of groups $ \varphi : N_{1} \xrightarrow{\sim} N_{2} $, then this extends to a $ G $-equivariant isomorphism of $ E $-Hopf algebras $ \varphi : E[N_{1}] \xrightarrow{\sim} E[N_{2}] $, which (since it is $ G $-equivariant) descends to an isomorphism of $ F $-Hopf algebras $ \varphi : E[N_{1}]^{G} \xrightarrow{\sim}  E[N_{2}]^{G} $. 
\\ \\ 
Conversely, suppose that $ \psi : E[N_{1}]^{G} \xrightarrow{\sim} E[N_{2}]^{G} $ is an isomorphism of $ F $-Hopf algebras. Since we have $ E \otimes_{F} E[N_{i}]^{G} $ for each $ i $, it extends to an isomorphism of $ E $-Hopf algebras $ \psi : E[N_{1}] \xrightarrow{\sim} E[N_{2}] $. Restricting to the grouplike elements on both sides we obtain an isomorphism of groups $ \psi : N_{1} \xrightarrow{\sim} N_{2} $. Thus, we need to show that $ \psi $ is $ G $-equivariant. Let $ \eta \in N_{1} $, and let $ \{ a_{i} \} $ be an $ F $-basis of $ E[N_{1}]^{G} $. Then $ \{ a_{i} \} $ is also an $ E $-basis of $ E[N_{1}] $, so there exist unique $ x_{i} \in E $ such that
\[ \eta = \sum_{i} x_{i} a_{i}. \]
Now for $ g \in G $ we have
\begin{align*}
\psi(g \ast_{1} \eta) & = \psi \left(g \ast_{1} \sum_{i} x_{i} a_{i} \right) \\
& = \psi \left( \sum_{i} g(x_{i})  (g \ast_{1} a_{i}) \right) \\
& = \psi \left( \sum_{i} g(x_{i}) a_{i} \right) \\
& = \sum_{i} g(x_{i}) \psi(a_{i}) \\
& = \sum_{i}g( x_{i})(g \ast_{2} \psi(a_{i})) \\
&= g \ast_{2} \left( \sum_{i} x_{i} \psi(a_{i}) \right) \\
&= g \ast_{2} \psi \left( \sum_{i} x_{i} a_{i} \right) \\
& = g \ast_{2} \psi( \eta).
\end{align*}
Therefore $ \psi(g \ast_{1} \eta) = g \ast_{2} \psi( \eta) $, and so $ \psi : N_{1} \xrightarrow{\sim} N_{2} $ is $ G $-equivariant. 
\end{proof}

The following corollary is the principal application of Theorem \ref{theorem_Hopf_algebra_isomorphism}. We retain the notation established in section \ref{section_introduction} to describe the theorem of Greither and Pareigis: $ L/K $ is a finite separable extension of fields with Galois closure $ E$, $ G = \Gal{E/K} $, $ G_{L} = \Gal{E/L} $, $ X = G/G_{L} $. A Hopf algebra giving a Hopf-Galois structure on $ L/K $ then has the form $ E[N]^{G} $ for some regular subgroup $ N $ of $ \perm{X} $ stable under the action of $ G $ on $ \perm{X} $ by $ \sigma \ast \eta = \lambda(\sigma) \eta \lambda(\sigma)^{-1} $. 

\begin{corollary} \label{corollary_Hopf_algeba_isomorphism}
Let $ E[N_{1}]^{G} $ and $ E[N_{2}]^{G} $ give Hopf-Galois structures on $ L/K $. Then $ E[N_{1}]^{G} \cong E[N_{2}]^{G} $ as $ K $-Hopf algebras if and only if $ N_{1} \cong N_{2} $ as $ G $-groups. 
\end{corollary}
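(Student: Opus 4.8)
The plan is to deduce this directly from Theorem \ref{theorem_Hopf_algebra_isomorphism} by specializing the general setup to the Greither--Pareigis situation. The key observation is that here $ G = \Gal{E/K} $ is the \emph{full} Galois group of $ E/K $, so by the fundamental theorem of Galois theory the fixed field $ F = E^{G} $ is exactly $ K $. Thus an isomorphism of $ F $-Hopf algebras in the sense of the theorem is precisely an isomorphism of $ K $-Hopf algebras, which is what the corollary concerns.

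Before invoking the theorem, I would verify that its hypotheses are met---namely that each $ N_{i} $ is genuinely a $ G $-group under the action $ \ast $. Since $ N_{i} $ is a regular subgroup of $ \perm{X} $ that is normalized by $ \lambda(G) $, conjugation by $ \lambda(\sigma) $ carries $ N_{i} $ into itself for every $ \sigma \in G $; and because conjugation within $ \perm{X} $ is a group automorphism, its restriction $ \eta \mapsto \lambda(\sigma)\eta\lambda(\sigma)^{-1} $ is an automorphism of $ N_{i} $. Hence the action $ \ast $ realizes each $ N_{i} $ as a $ G $-group, exactly as required by the theorem.

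With these two points in hand, the corollary follows at once: applying Theorem \ref{theorem_Hopf_algebra_isomorphism} with this $ G $ and $ F = K $ yields $ E[N_{1}]^{G} \cong E[N_{2}]^{G} $ as $ K $-Hopf algebras if and only if $ N_{1} \cong N_{2} $ as $ G $-groups. I do not expect any real obstacle here, since all the substantive work---the descent of the isomorphism along $ E \otimes_{K} - $ and the recovery of the group isomorphism by restricting to grouplike elements---has already been carried out in the proof of the theorem. The only matters requiring care are the two verifications above: that taking $ G $ to be the full Galois group forces $ F = K $, and that the conjugation action $ \ast $ acts by automorphisms so that the $ N_{i} $ are legitimately $ G $-groups.
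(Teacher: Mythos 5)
Your proof is correct and follows exactly the route the paper intends: the paper gives no separate argument for this corollary, presenting it as the direct specialization of Theorem \ref{theorem_Hopf_algebra_isomorphism} to the case where $G$ is the full Galois group, so that $F = E^{G} = K$. Your additional check that conjugation by $\lambda(\sigma)$ makes each $N_{i}$ a genuine $G$-group is a reasonable verification that the paper leaves implicit.
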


We illustrate the applicability of the theory with a variety of examples. 

\begin{example}{\bf (The classical and canonical nonclassical Hopf-Galois structures)} \label{example_Hopf_isomorphism_classical_nonclassical}
If $ L/K $ is a Galois extension then in the notation of the theorem of Greither and Pareigis we have $ E=L $, $ G_{L} = \{ 1 \} $, and $ X = G $, and the homomorphism $ \lambda : G \rightarrow \perm{G} $ is in fact the left regular embedding of $ G $. In this case examples of $ G $-stable regular subgroups of $ \perm{G}  $ are $ \lambda(G) $ itself and $ \rho(G) $, the image of $ G $ under the right regular embedding (these subgroups coincide if and only if $ G $ is abelian). The elements of $ \rho(G) $ commute with those of $ \lambda(G) $, so the action of $ G $ on $ \rho(G) $ is trivial, and therefore the Hopf algebra appearing in the Hopf-Galois structure corresponding to $ \rho(G) $ is $ L[\rho(G)]^{G} = L^{G}[\rho(G)] = K[\rho(G)] $, which is isomorphic to the Hopf algebra $ K[G] $. We call this the {\em classical Hopf-Galois structure} on $ L/K $. If $ G $ is nonabelian then the subgroup $ \lambda(G) $ corresponds to a different Hopf-Galois structure on $ L/K $, which we call the {\em canonical nonclassical Hopf-Galois structure}. The Hopf algebra appearing in this structure is $ H_{\lambda} := L[\lambda(G)]^{G} $. Since $ G $ is nonabelian the action of $ G $ on $ \lambda(G) $ is not trivial (the orbits are the conjugacy classes in $ \lambda(G) $), and so $ \rho(G) \not \cong \lambda(G) $ as $ G $-groups in this case. Therefore by Corollary \ref{corollary_Hopf_algeba_isomorphism} $ K[G] \not \cong H_{\lambda} $ as $ K $-Hopf algebras. 
\end{example}

\begin{example}{\bf (Elementary abelian extensions of degree $ p^{2} $)} \label{example_Hopf_isomorphism_elementary_abelian_p2}
Let $ p > 2 $ be prime, and let $ L/K $ be a Galois extension of fields of degree $ p^{2} $ with elementary abelian Galois group $ G $. In \cite[Corollary to Theorem 1, part (iii)]{Byott1996} it is shown that $ L/K $ admits $ p^{2} $ Hopf-Galois structures.  By applying Corollary \ref{corollary_Hopf_algeba_isomorphism} we can determine which of these Hopf-Galois structures involve isomorphic Hopf algebras. In \cite[Theorem 2.5]{Byott2002} the regular subgroups of $ \perm{G} $ that yield the Hopf-Galois structures are determined as follows: let $ T $ be a subgroup of $ G $ of order $ p $, and fix elements $ s, t \in G $ such that
\[ T = \langle t \rangle, \hspace{4mm} s^{p}=1_{G}, \hspace{4mm} G = \langle s, t \rangle. \]
Let $ d \in \{0, 1, \ldots ,p-1\} $, and define $ \alpha, \beta \in \perm{G} $ in terms of their actions on a typical element $ s^{k}t^{l} \in G $:
\begin{equation} \label{equation_byott_p2_permutations}
\begin{array}{rcl}
\alpha[s^{k} t^{l}] & = & s^{k} t^{l-1} \\
\beta[s^{k} t^{l}] & = & s^{k-1} t^{l+(k-1)d}.
\end{array} 
\end{equation}
It is easily verified that $\alpha^{p}=\beta^{p}=1 $ and $ \alpha\beta =\beta\alpha $, whence $ N_{T,d}= \langle \alpha, \beta \rangle \cong G $. Moreover, one can show that $ N_{T,d} $ is a regular subgroup of $ \perm{G} $, and that 
\begin{equation} \label{equation_byott_p2_lambda_action}
s \ast \alpha = \alpha, \hspace{4mm} t \ast \alpha = \alpha, \hspace{4mm} s \ast \beta = \alpha^{d} \beta, \hspace{4mm} t \ast \beta=\beta. 
\end{equation}
Thus $ N_{T,d} $ is $ G $-stable, and therefore corresponds to a Hopf-Galois structure on $ L/K $ with Hopf algebra $ H_{T,d} = L[N_{T,d}]^{G} $. If $ d=0 $ then $ N_{T,d}= \rho(G)  $ regardless of the choice of $ T $, and so we obtain the classical Hopf-Galois structure. Taking $ 1 \leq d \leq p-1 $ and letting $ T $ vary through the subgroups of $ G $ of order $ p $, we obtain $ p^{2}-1 $ distinct groups $ N_{T,d} \neq \rho(G) $, giving in total $ p^{2} $  Hopf-Galois structures on $ L/K $. These are all the Hopf-Galois structures on $ L/K $.
\\ \\ 
We claim that two Hopf algebras $ H_{1} = H_{T_{1},d_{1}} $ and $ H_{2} = H_{T_{2},d_{2}} $ are isomorphic as Hopf algebras if and only if $ d_{1}=d_{2}=0 $ or $ d_{1}d_{2} \neq 0 $ and $ T_{1}=T_{2} $. Let 
\[ \begin{array}{ccccl}
N_{1} & = & N_{T_{1},d_{1}} & = & \langle \alpha_{1}, \beta_{1} \rangle \\
N_{2} & = & N_{T_{2},d_{2}} & = & \langle \alpha_{2}, \beta_{2} \rangle,
 \end{array} \]
where $ \alpha_{1},\beta_{1} $ and $ \alpha_{2},\beta_{2} $ are defined as in Equations \eqref{equation_byott_p2_permutations}, using $ d_{1}, d_{2} $ as appropriate. 
We have seen that if $ d_{1}=d_{2}=0 $ then $ H_{1} = H_{2} $. If $ d_{1}d_{2} \neq 0 $ and $ T_{1}=T_{2} $ then there exists $ c \in \{1, \ldots ,p-1\} $ such that $ cd_{2}\equiv d_{1} \pmod{p} $. Using this, define a homomorphism $ \varphi: N_{1} \rightarrow N_{2} $ by
\[ \varphi(\alpha_{1}) = \alpha_{2}, \hspace{4mm} \varphi(\beta_{1}) = \beta_{2}^{c}. \]
It is clear that $ \varphi $ is an isomorphism, and we claim that it is $ G $-equivariant. We have: 
\[ \begin{array}{l}
\varphi(s \ast \alpha_{1} ) = \varphi( \alpha_{1} ) = \alpha_{2} = s \ast \alpha_{2} =  s \ast \varphi(\alpha_{1}), \\
\varphi(t \ast \alpha_{1} ) = \varphi( \alpha_{1} ) = \alpha_{2} = t \ast \alpha_{2} = t \ast \varphi(\alpha_{1}), \\
\varphi(s \ast \beta_{1}) = \varphi(\alpha_{1}^{d_{1}}\beta_{1}) = \alpha_{2}^{d_{1}} \beta_{2}^{c} = \alpha_{2}^{d_{2}c} \beta_{2}^{c}  = s \ast \beta_{2}^{c} = s \ast \varphi(\beta_{1}), \\
\varphi(t \ast \beta_{1} ) = \varphi( \beta_{1} ) = \beta_{2}^{c} = t \ast \beta_{2}^{c} = t \ast \varphi(\beta_{1}). 
\end{array} \]
Thus $ \varphi $ is a $ G $-equivariant isomorphism of $ N_{1} $ onto $ N_{2} $, and so $ H_{1} \cong H_{2} $ as Hopf algebras by Corollary \ref{corollary_Hopf_algeba_isomorphism}. 
\\ \\
For the converse, note that if $ T $ is a subgroup of $ G $ of order $ p $ and $ d \neq 0 $ then by \eqref{equation_byott_p2_lambda_action} the kernel of the action of $ G $ on $ N_{T,d} $ is precisely $ T $. Therefore, if $ d_{1}d_{2} \neq 0 $ and $ N_{1} \cong N_{2} $ as $ G $-groups then we must have $ T_{1} = T_{2} $. 
\\ \\
Thus, the $ p^{2} $ Hopf-Galois structures admitted by $ L/K $ involve exactly $ p+2 $ nonisomorphic Hopf algebras: the group algebra $ K[G] $ with its usual action on $ L $ and, for each of the $ p+1 $ subgroups $ T $ of $ G $ of order $ p $, a Hopf algebra $ H_{T}=H_{T,1} $, acting in $ p-1 $ different ways. 
\end{example}

\begin{example}{\bf (Fixed point free endomorphisms)} \label{example_Hopf_isomorphism_fpf}
If $ L/K $ be a finite Galois extension of fields with nonabelian Galois group $ G $ then, as described above, $ L/K $ admits a canonical nonclassical Hopf-Galois structure with Hopf algebra $ H_{\lambda} = L[\lambda(G)]^{G} $. In \cite{Childs2013}, Childs shows how certain endomorphisms of $ G $ can yield further Hopf-Galois structures on $ L/K $, whose Hopf algebras are isomorphic to $ H_{\lambda} $. Specifically, let $ \psi $ be an endomorphism of $ G $ which is abelian (meaning $ \psi(gh) = \psi(hg) $ for all $ g, h\in G $) and fixed point free (meaning $ \psi(g)=g $ if and only if $ g=1_{G} $). From $ \psi $ we may construct a homomorphism $ \alpha_{\psi}: G \rightarrow \perm{G} $
defined by
\[ \alpha_{\psi}(g) = \lambda(g) \rho(\psi(g)). \]
One can show that $ \alpha_{\psi}(G) $ is a $ G $-stable regular subgroup of $ \perm{G} $, which therefore corresponds to a Hopf-Galois structure on $ L/K $.  It is shown in \cite[Theorem 5]{Childs2013} that the Hopf algebras appearing in the Hopf-Galois structures produced by this construction are all isomorphic to $ H_{\lambda} $ as Hopf algebras. We can reinterpret these ideas via Corollary \ref{corollary_Hopf_algeba_isomorphism}.
\\ \\
We know that $ \lambda(G) $ and $ \alpha_{\psi}(G) $ are both regular subgroups of $ \perm{G} $ normalized by $ \lambda(G) $. Define a map $ \varphi : \lambda(G) \rightarrow \alpha_{\psi}(G) $ by
\[ \varphi(\lambda(g)) = \alpha_{\psi}(g) = \lambda(g) \rho(\psi(g)). \]
It is clear that $ \varphi $ is an isomorphism of groups. In fact, it is also $ G $-equivariant: if $ h\in G $ then we have
\begin{align*}
h \ast \varphi(\lambda(g)) & = \lambda(h) \lambda(g) \rho(\psi(g)) \lambda(h)^{-1} \\
& = \lambda(h) \lambda(g) \lambda(h)^{-1} \rho(\psi(g)) \mbox{ (since $\lambda(G) $ and $ \rho(G) $ commute inside $\perm{G} $)} \\
& = \lambda(hg h^{-1}) \rho(\psi(g)) \\
& = \lambda(hg h^{-1}) \rho(\psi(hg h^{-1})) \mbox{ (since $\psi $ abelian implies that $\psi(g) = \psi(hg h^{-1})$).} \\
& = \varphi(\lambda(hg h^{-1})) \\
&= \varphi(h \ast \lambda(g)). 
\end{align*}
Thus $ \varphi : \lambda(G) \rightarrow \alpha_{\psi}(G) $ is a $ G $-equivariant isomorphism of groups, and so by Corollary \ref{corollary_Hopf_algeba_isomorphism} $ L[\alpha_{\psi}(G)]^{G} \cong L[\lambda(G)]^{G} $ as Hopf algebras. 
\end{example}

\begin{example}{\bf (Conjugating regular subgroups by elements of $ \rho(G) $)} \label{example_Hopf_isomorphism_conjugation}
Let $ L/K $ be a Galois extension of fields with nonabelian Galois group $ G $, and let $ L[N]^{G} $ give a Hopf-Galois structure on $ L/K $. Since $ G $ is nonabelian, we have $ \lambda(G) \neq \rho(G) $ and so, although $ N $ is normalized by $ \lambda(G) $, it may not be normalized by $ \rho(G) $. Suppose that $ g \in G $ is such that $ N_{g} = \rho(g)N\rho(g)^{-1} \neq N $. Then $ N_{g} $ is a regular subgroup of $ \perm{G} $ since $ N $ is, and it is $ G $-stable: if $ h \in G $ then
\begin{align*}
h \ast N_{g} & = h \ast\rho(g)N\rho(g)^{-1} \\
& = \lambda(h) \rho(g)N\rho(g)^{-1} \lambda(h)^{-1} \\
& = \rho(g)\lambda(h) N \lambda(h)^{-1}\rho(g)^{-1} \\
& = \rho(g)N\rho(g)^{-1} \\
&= N_{g}. 
\end{align*}
Therefore $ N_{g} $ corresponds to a Hopf-Galois structure on $ L/K $, with Hopf algebra $ L[N_{g}]^{G} $. The map $ \varphi : N \rightarrow N_{g} $
defined by
\[ \varphi(\eta) = \rho(g) \eta \rho(g)^{-1} \]
is an isomorphism of groups, and is $ G $-equivariant since if $ h \in G $ then
\begin{align*}
h \ast \varphi(\eta) & =h \ast \rho(g) \eta \rho(g)^{-1} \\
&= \lambda(h) \rho(g) \eta \rho(g)^{-1} \lambda(h)^{-1} \\
&= \rho(g)\lambda(h)  \eta  \lambda(h)^{-1}\rho(g)^{-1} \\
&= \rho(g)(h \ast \eta)\rho(g)^{-1} \\
& = \varphi(h \ast \eta).
\end{align*}
Therefore by Corollary \ref{corollary_Hopf_algeba_isomorphism} $ L[N_{g}]^{G} \cong L[N]^{G} $ as Hopf algebras.
\end{example}

\begin{example}{\bf (A specific example of conjugating by elements of $ \rho(G) $) \\}
Let $ p,q $ be primes with $ p \equiv 1 \pmod{q} $, and let $ L/K $ be a Galois extension of fields with group isomorphic to the metacyclic group of order $ pq $:
\[ G = \langle \sigma, \tau \mid \sigma^{p}=\tau^{q}=1, \; \tau \sigma = \sigma^{g} \tau \rangle, \]
where $ g $ is a fixed positive integer whose order modulo $ p $ is $ q $. By \cite[Theorem 6.2]{Byott2004}, $ L/K $ admits precisely $ 2+p(2q-3) $ Hopf-Galois structures: the classical structure, the canonical nonclassical structure, $ 2p(q-2) $ further structures of metacyclic type, and $ p $ structures of cyclic type. We can use Example \ref{example_Hopf_isomorphism_conjugation} to show that the Hopf algebras appearing in the Hopf-Galois structures of cyclic type are all isomorphic as Hopf algebras. 
The regular subgroups $ N_{c} $ ($c=0, \ldots ,p-1$) of $ \perm{G} $ corresponding to these Hopf-Galois structures are described explicitly in \cite[Lemma 4.1]{Byott2004}, each in terms of two generators. Using these descriptions, we can verify that $ N_{c} = \langle \eta_{c} \rangle $, where
\[ \eta_{c}[\sigma^{u} \tau^{v}] = \sigma^{u+1-cg^{v}} \tau^{v+1}. \]
That is:
\[ \eta_{c} = \lambda(\sigma) \rho(\sigma^{-c}\tau)^{-1}. \]
Using the fact that $ \lambda(G) $ and $ \rho(G) $  commute inside $ \perm{G} $ we have in particular, for $ c=0 $,
\begin{eqnarray*}
\rho(\sigma^{i}) \eta_{0} \rho(\sigma^{-i}) & = & \lambda(\sigma) \rho(\sigma^{i} \tau \sigma^{-i} )^{-1} \\
& = & \lambda(\sigma) \rho(\sigma^{i(1-g)}\tau)^{-1} \\
& = & \eta_{i(g-1)},
\end{eqnarray*}
where the subscript should be interpreted modulo $ p $. Since $ g $ has order $ q $ modulo $ p $ we certainly have $ g-1 \neq 0 \pmod{p} $ and so, given $ c= 0, \ldots ,p-1 $ there exists $ i $ such that 
\[  \rho(\sigma^{i}) \eta_{0} \rho(\sigma^{-i}) = \eta_{c}. \] 
Therefore the groups $ N_{c} $ are all isomorphic as $ G $-groups, and so the Hopf algebras $ H_{c} $ are all isomorphic as $ K $-Hopf algebras.
\end{example}

In the case that $ q=2 $ (so that $ G \cong D_{p} $), this result is established in \cite[Proposition 4.3]{TARP_PROMS2017}, by methods different to those employed above. 

\subsection{Hopf algebra isomorphisms after base change} \label{subsection_Hopf-algebra_isomorphism_base_change}

We return to the situation addressed by the theorem of Greither and Pareigis, as described in section \ref{section_introduction}: $ L/K $ is a finite separable extension of fields whose Galois closure $ E $ has group $ G $, so that a Hopf algebra giving a Hopf-Galois structure on $ L/K $ has the form $ E[N]^{G} $ for some $ G $-stable regular subgroup of $ \perm{X} $. If $ E[N_{1}]^{G} $ and $ E[N_{2}]^{G} $ are two such Hopf algebras then, by Galois descent, we have $ E \otimes_{K} E[N_{i}]^{G} = E[N_{i}] $ for each $ i $, so certainly $ E \otimes_{K} E[N_{1}]^{G} $ and $ E \otimes_{K} E[N_{2}]^{G} $ are isomorphic as $ E $-Hopf algebras. However, there may exist intermediate fields $ K^{\prime} $ such that $ K^{\prime} \otimes_{K} E[N_{1}]^{G} $  and $ K^{\prime} \otimes_{K} E[N_{2}]^{G} $ are isomorphic as $ K^{\prime} $-Hopf algebras. In this section we identify the fields with this property that are also Galois over $ K $. 

\begin{theorem} \label{theorem_Hopf_algebra_isomorphism_base_change}
Let $ E[N_{1}]^{G} $ and $ E[N_{2}]^{G} $ give Hopf-Galois structures on $ L/K $, let $ G^{\prime} $ be a normal subgroup of $ G $, and let $ K^{\prime} = E^{G^{\prime}} $. Then
\[ K^{\prime} \otimes E[N_{1}]^{G} \cong K^{\prime} \otimes_{K} E[N_{2}]^{G} \mbox{ as $ K^{\prime} $-Hopf algebras} \]
if and only if $ (N_{1},\ast) \cong (N_{2},\ast) $ as $ G^{\prime} $-groups. 
\end{theorem}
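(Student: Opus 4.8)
The plan is to reduce the statement to Theorem~\ref{theorem_Hopf_algebra_isomorphism} by means of a base-change computation (Galois descent in stages). Concretely, I would first establish the identification
\[ K^{\prime} \otimes_{K} E[N_{i}]^{G} \;\cong\; E[N_{i}]^{G^{\prime}} \quad \text{as } K^{\prime}\text{-Hopf algebras} \]
for $ i = 1,2 $, where on the right $ G^{\prime} $ acts on $ E[N_{i}] $ as the restriction of the given action of $ G $ (Galois automorphisms on $ E $, and $ \ast $ on $ N_{i} $). Granting this, the theorem follows immediately: since $ G^{\prime} \le G = \Gal{E/K} $, the extension $ E/K^{\prime} $ is Galois with group $ G^{\prime} $ and $ K^{\prime} = E^{G^{\prime}} $, so Theorem~\ref{theorem_Hopf_algebra_isomorphism}, applied with $ G^{\prime} $ in the role of the group and $ K^{\prime} = E^{G^{\prime}} $ in the role of the base field $ F $, gives $ E[N_{1}]^{G^{\prime}} \cong E[N_{2}]^{G^{\prime}} $ as $ K^{\prime} $-Hopf algebras if and only if $ N_{1} \cong N_{2} $ as $ G^{\prime} $-groups. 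Combining this equivalence with the displayed identification yields the stated result in both directions.

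The substance of the argument is therefore the base-change formula. The starting point is the descent isomorphism $ E \otimes_{K} E[N_{i}]^{G} \xrightarrow{\sim} E[N_{i}] $, $ e \otimes x \mapsto ex $, recorded in the introduction. I would first check that this map is $ G $-equivariant when $ G $ acts on the left-hand tensor factor $ E $ alone and on $ E[N_{i}] $ by the natural action; this is immediate because the elements $ x \in E[N_{i}]^{G} $ are $ G $-fixed. Restricting to the subgroup $ G^{\prime} $ and passing to $ G^{\prime} $-fixed points then yields a $ K^{\prime} $-Hopf algebra isomorphism
\[ \left( E \otimes_{K} E[N_{i}]^{G} \right)^{G^{\prime}} \;\cong\; E[N_{i}]^{G^{\prime}}. \]
On the other hand, choosing a $ K $-basis of $ E[N_{i}]^{G} $ and using $ E^{G^{\prime}} = K^{\prime} $, an elementary computation identifies the $ G^{\prime} $-fixed points of $ E \otimes_{K} E[N_{i}]^{G} $ (with $ G^{\prime} $ acting on the $ E $-factor) with $ K^{\prime} \otimes_{K} E[N_{i}]^{G} $; this is the standard fact that $ \left( E \otimes_{K} V \right)^{G^{\prime}} = K^{\prime} \otimes_{K} V $ for any $ K $-vector space $ V $. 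Matching the two descriptions gives the desired formula.

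I expect the main obstacle to be purely a matter of bookkeeping: verifying that the $ G^{\prime} $-action produced by descent---acting on the coefficient field $ E $---coincides with the action $ \ast $ used both to form the fixed-point algebra $ E[N_{i}]^{G^{\prime}} $ and to define the notion of isomorphism of $ G^{\prime} $-groups. Once the equivariance of the descent isomorphism is in hand this matching is automatic, and since every map in sight is a morphism of Hopf algebras, no separate verification of the coalgebra structure is needed. Finally, I would note that normality of $ G^{\prime} $ is used only to guarantee that $ K^{\prime}/K $ is itself Galois (the situation of interest in this subsection); the argument above relies solely on the fact that $ E/K^{\prime} $ is Galois with group $ G^{\prime} $, which holds for any subgroup $ G^{\prime} \le G $.
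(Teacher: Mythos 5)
Your proof is correct, and it shares the paper's overall skeleton---both arguments reduce the theorem to Theorem~\ref{theorem_Hopf_algebra_isomorphism} (applied to the subgroup $G^{\prime} \leq \Gal{E/K}$ with base field $K^{\prime} = E^{G^{\prime}}$) via the key identity $K^{\prime} \otimes_{K} E[N_{i}]^{G} \cong E[N_{i}]^{G^{\prime}}$---but you prove that identity by a genuinely different mechanism. The paper descends in two stages \emph{downward}: it writes $E[N_{i}]^{G} = \bigl( E[N_{i}]^{G^{\prime}} \bigr)^{G/G^{\prime}}$ and then applies Galois descent to the extension $K^{\prime}/K$, which is Galois with group $G/G^{\prime}$; this is exactly where normality of $G^{\prime}$ enters, since otherwise $G/G^{\prime}$ is not a group and $K^{\prime}/K$ need not be Galois. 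You instead work \emph{upward} inside $E$: you take $G^{\prime}$-invariants of the top-level descent isomorphism $E \otimes_{K} E[N_{i}]^{G} \cong E[N_{i}]$, after checking it is $G^{\prime}$-equivariant for the action on the $E$-tensor-factor alone (immediate since elements of $E[N_{i}]^{G}$ are $G$-fixed), and combine this with the elementary identification $\bigl( E \otimes_{K} V \bigr)^{G^{\prime}} = K^{\prime} \otimes_{K} V$. Your route buys a real strengthening that you correctly identify: it never uses normality, so the equivalence holds for an arbitrary subgroup $G^{\prime} \leq G$, whereas the paper's proof is genuinely confined to the normal case (a restriction the paper imposes deliberately, since that subsection only seeks intermediate fields Galois over $K$). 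The paper's route, in exchange, exhibits the intermediate object $E[N_{i}]^{G^{\prime}}$ as a descended form of $E[N_{i}]^{G}$ along $K^{\prime}/K$ itself, which is the natural picture when one wants to iterate base changes through a tower of normal subextensions, as in Theorem~\ref{theorem_cyclic_p_n_Hopf_isomorphism}. Both proofs gloss the verification that the relevant identifications respect the coalgebra structure at the same level of rigor, so no gap there.
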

\begin{proof}
For $ i=1,2 $ we have by Galois descent that $ E[N_{i}]^{G^{\prime}} $ is a $ K^{\prime} $-Hopf algebra, and one may show that action of $ G $ on $ E[N_{i}] $ induces an action of the group $ G/G^{\prime} $ on $ E[N_{i}]^{G^{\prime}} $ and that
\[ E[N_{i}]^{G} = \left( E[N_{i}]^{G^{\prime}} \right)^{G/G^{\prime}}. \]
Now since $ K^{\prime}/K $ is Galois with group $ G/G^{\prime} $, we have
\[  K^{\prime} \otimes_{K} \left( E[N_{i}]^{G^{\prime}} \right)^{G/G^{\prime}}  = E[N_{i}]^{G^{\prime}}, \]
again by Galois descent. Therefore 
\[ K^{\prime} \otimes_{K} E[N_{i}]^{G}  = E[N_{i}]^{G^{\prime}} \]
for $ i=1,2 $, and so $ K^{\prime} \otimes_{K} E[N_{1}]^{G}   \cong K^{\prime} \otimes_{K} E[N_{2}]^{G} $ as $ K^{\prime} $-Hopf algebras if and only if $ E[N_{1}]^{G^{\prime}} \cong E[N_{2}]^{G^{\prime}} $ as $ K^{\prime} $-Hopf algebras. By Theorem \ref{theorem_Hopf_algebra_isomorphism} this occurs if and only if $ (N_{1},\ast) \cong (N_{2},\ast) $ as $ G^{\prime} $-groups. 
\end{proof}

\begin{example}{\bf (The smallest extension of scalars giving a group algebra)} \label{example_extension_of_scalars_group_algebra}
Let $ E[N]^{G} $ be a Hopf algebra giving a Hopf-Galois structure on $ L/K $, and let $ G^{\prime} $ denote the kernel of the action of $ G $ on $ N $. Then $ E[N]^{G^{\prime}} = E^{G^{\prime}}[N] = K^{\prime}[N] $, a group algebra with coefficients drawn from the field $ E^{G^{\prime}}=K^{\prime} $. By Theorem \ref{theorem_Hopf_algebra_isomorphism_base_change} we have $ K^{\prime} \otimes_{K} E[N]^{G} =K^{\prime}[N] $. In fact $ K^{\prime} $ is minimal amongst the subfields $ F $ of $ E $ such that $ F \otimes_{K} E[N]^{G} $ is isomorphic as a Hopf algebra to a group algebra (see \cite[Corollary 3.2]{GreitherPareigis1987}).
\end{example}

\begin{example}{\bf (Elementary abelian extensions of degree $ p^{2} $ revisited)}
Let $ p $ be an odd prime, and let $ L/K $ be an elementary abelian extension of degree $ p^{2} $ with group $ G $. In Example \ref{example_Hopf_isomorphism_elementary_abelian_p2} we determined which of the Hopf algebras appearing in the classification of Hopf-Galois structures on $ L/K $ are isomorphic as $ K $-Hopf algebras. Here we can show that, given any two Hopf algebras $ H_{1},H_{2} $ giving nonclassical Hopf-Galois structures on the extension, there exists a subfield $ K^{\prime} $ of $ L/K $ of degree $ p $ over $ K $ such that $ K^{\prime} \otimes_{K} H_{1} \cong K^{\prime} \otimes_{K} H_{2} $ as $ K^{\prime} $-Hopf algebras. 
Recall from Example \ref{example_Hopf_isomorphism_elementary_abelian_p2} that for $ i=1,2 $ the Hopf algebra $ H_{i} $ corresponds to a choice of subgroup $ T_{i} $ of degree $ p $ and an integer $ d_{i} \in \{1, \ldots ,p-1\} $ (the possibility $ d_{i}=0 $ is excluded since the Hopf algebras give nonclassical structures). Specifically, we have $ H_{i} = L[N_{i}]^{G} $, where $ N_{i} $ is generated by two permutations $ \alpha_{i}, \beta_{i} $ as described in Equations \eqref{equation_byott_p2_permutations} and the action of $ G $ on $ N_{i} $ is as described in Equation \eqref{equation_byott_p2_lambda_action}. From this last equation, we see that the kernel of the action of $ G $ on each $ N_{i} $ is precisely $ T_{i} $, which we now write as $ \langle t_{i} \rangle $. Let $ g^{\prime} = t_{1}t_{2} $, $ G^{\prime} = \langle g^{\prime} \rangle $, and $ K^{\prime} = L^{G^{\prime}} $; we claim that $ K^{\prime} \otimes_{K} H_{1} \cong K^{\prime} \otimes_{K} H_{2} $ as $ K^{\prime} $-Hopf algebras. By Theorem \ref{theorem_Hopf_algebra_isomorphism_base_change}, we must show that there is a $ G^{\prime} $-equivariant isomorphism $ \varphi : N_{1} \xrightarrow{\sim} N_{2} $. 
For each $ i $ we have $ g^{\prime} \ast \alpha_{i} = \alpha_{i} $ for each $ i $, and so we focus our attention on the action of $ g^{\prime} $ on the  $ \beta_{i} $. 
Since each $ T_{i} $ is precisely the kernel of the action of $ G $ on $ N_{i} $ and $ G $ is abelian, by equation \eqref{equation_byott_p2_lambda_action} we have 
\[ g^{\prime} \ast \beta_{1} = t_{1}t_{2} \ast \beta_{1} = t_{2} \ast \beta_{1} = \alpha_{1}^{u_{1}} \beta_{1} \mbox{ for some $ u_{1} \in \{ 1, \ldots ,p-1 \} $} \]
and
\[ g^{\prime} \ast \beta_{2}= t_{1}t_{2} \ast \beta_{2} = t_{1} \ast \beta_{2} = \alpha_{2}^{u_{2}} \beta_{2} \mbox{ for some $ u_{2} \in \{ 1, \ldots ,p-1 \} $} .\] 
 There exists an integer $ c $ such that $ cu_{2} \equiv u_{1} \pmod{p} $; use this to define a homomorphism $ \varphi : N_{1} \rightarrow N_{2} $ by
\[ \varphi(\alpha_{1}) = \alpha_{2}, \hspace{4mm} \varphi(\beta_{1})=\beta_{2}^{c}. \]
It is clear that $ \varphi $ is a isomorphism of $ N_{1} $ onto $ N_{2} $, and to verify that it is $ G^{\prime} $-equivariant it suffices to consider the action of $ g^{\prime} $ on  $ \beta_{1} $. We have:
\begin{eqnarray*}
\varphi( g^{\prime} \ast \beta_{1}) & = & \varphi( \alpha_{1}^{u_{1}} \beta_{1} ) \\
& = & \alpha_{2}^{u_{1}} \beta_{2}^{c} \\
& = & \alpha_{2}^{cu_{2}} \beta_{2}^{c} \\
& = & g^{\prime} \ast \beta_{2}^{c}.
\end{eqnarray*}
Therefore $ \varphi $ is a $ G^{\prime} $-equivariant isomorphism of $ N_{1} $ onto $ N_{2} $, and so Theorem \ref{theorem_Hopf_algebra_isomorphism_base_change} we have
\[ K^{\prime} \otimes_{K} H_{1} \cong K^{\prime} \otimes_{K} H_{2} \mbox{ as $ K^{\prime} $-Hopf algebras}. \]
\end{example}

\subsection{Hopf algebra isomorphisms via the holomorph} \label{subsection_Hopf_algebra_isomorphisms_holomorph}

We retain the notation of subsection \ref{subsection_Hopf-algebra_isomorphism_base_change}. We have seen that a necessary condition for $ E[N_{1}]^{G} \cong E[N_{2}]^{G} $ as Hopf algebras is that the underlying groups are isomorphic; we now use this requirement to change our point of view. Fix an abstract group $ N $ of the same order as $ G $,  and study embeddings $ \alpha: N \hookrightarrow \perm{G} $ such that $ \alpha(N) $ is $ G $-stable and regular. Byott's translation theorem \cite[Proposition 1]{Byott1996} relates such embeddings to certain embeddings of $ G $ into the holomorph of $ N $, denoted $ \Hol{N} $, which is normalizer of $ \lambda(N) $ inside $ \perm{N} $. More precisely, there is a bijection between the sets
\[ \{ \alpha : N \hookrightarrow \perm{X} \mid \alpha \mbox{ an injective homomorphism whose image in $ G $-stable and regular} \} \]
and
\[ \{ \beta : G \hookrightarrow \Hol{N} \mid \beta \mbox{ an injective homomorphism such that $ \beta(G_{L}) $ is the stabilizer of $ e_{N} $} \}. \] 
Since $ \Hol{N} = \rho(N) \rtimes \Aut{N} $ \cite[(7.2) Proposition]{Ch00}, it is much smaller than $ \perm{X} $, and so Byott's translation theorem is often useful in the enumeration of the Hopf-Galois structures admitted by a given extension. Of course, different embeddings $ \alpha $ can have the same image (and so correspond to the same Hopf-Galois structure), but this can be detected by studying the corresponding $ \beta $: we have $ \alpha_{1}(N)=\alpha_{2}(N) $ if and only if $ \beta_{1}(G) $ and $ \beta_{2}(G) $ are conjugate by an element of $ \Aut{N} $. In fact, we can also detect when the Hopf-Galois structures corresponding to different embeddings $ \alpha_{1}, \alpha_{2} $ involve isomorphic Hopf algebras by studying properties of the corresponding $ \beta_{1}, \beta_{2} $:

\begin{theorem} \label{theorem_Hopf_algebra_isomorphism_holomorph}
Let $ \alpha_{1}, \alpha_{2} $ be embeddings of $ N $ into $ \perm{X} $ whose images are regular and normalized by $ \lambda(G) $, and let $ \beta_{1}, \beta_{2} $ be the corresponding embeddings of $ G $ into $ \Hol{N} $. Viewing $ \Hol{N} $ as $ \rho(N) \rtimes \Aut{N} $, let $ \overline{\beta_{1}}, \overline{\beta_{2}} $ denote the compositions of $ \beta_{1}, \beta_{2} $ with the projection onto the automorphism component. Then 
\[ E[\alpha_{1}(N)]^{G} \cong E[\alpha_{2}(N)]^{G} \mbox{ as Hopf algebras} \]
if and only if there exists $ \mu \in \Aut{N} $ such that 
\[ \overline{\beta_{2}}(g) = \mu \overline{\beta_{1}}(g) \mu^{-1} \mbox{ for all } g \in G. \]
\end{theorem}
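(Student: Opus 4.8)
The plan is to reduce Theorem \ref{theorem_Hopf_algebra_isomorphism_holomorph} to the already-established Corollary \ref{corollary_Hopf_algeba_isomorphism}, which tells us that $E[\alpha_1(N)]^G \cong E[\alpha_2(N)]^G$ as Hopf algebras if and only if $\alpha_1(N) \cong \alpha_2(N)$ as $G$-groups. So the entire task becomes translating the condition ``$\alpha_1(N)$ and $\alpha_2(N)$ are isomorphic as $G$-groups'' into the holomorph language, namely into the condition that $\overline{\beta_1}$ and $\overline{\beta_2}$ are conjugate in $\Aut{N}$. The key is to understand exactly what Byott's bijection does: given an embedding $\alpha : N \hookrightarrow \perm{X}$ with $G$-stable regular image, the corresponding $\beta : G \hookrightarrow \Hol{N}$ records how $G$ acts on $N$. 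The first step is therefore to make the action $\ast$ of $G$ on $\alpha_i(N)$ completely explicit in terms of $\beta_i$, and I expect to find that, after identifying $\alpha_i(N)$ with the abstract group $N$ via $\alpha_i$, the action $g \ast_i \eta$ corresponds precisely to $\overline{\beta_i}(g)(\eta)$, i.e.\ the automorphism component of $\beta_i(g)$ is exactly the automorphism of $N$ by which $g$ acts.

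Granting that identification, the argument is essentially formal. First I would fix the isomorphisms $\alpha_i : N \xrightarrow{\sim} \alpha_i(N)$ of abstract groups and transport the $G$-action on $\alpha_i(N)$ back to $N$; this turns each $\alpha_i(N)$, as a $G$-group, into the single abstract group $N$ equipped with the action $g \cdot_i \eta = \overline{\beta_i}(g)(\eta)$. A $G$-group isomorphism $\alpha_1(N) \xrightarrow{\sim} \alpha_2(N)$ then corresponds to an automorphism $\mu = \alpha_2^{-1} \circ f \circ \alpha_1$ of $N$ that intertwines the two actions: $\mu(g \cdot_1 \eta) = g \cdot_2 \mu(\eta)$ for all $g \in G$, $\eta \in N$. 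Writing this out using $\overline{\beta_i}$ gives $\mu \circ \overline{\beta_1}(g) = \overline{\beta_2}(g) \circ \mu$ for all $g$, which rearranges to $\overline{\beta_2}(g) = \mu \, \overline{\beta_1}(g) \, \mu^{-1}$—exactly the stated condition. Conversely, any $\mu$ satisfying this conjugation relation defines, via $f = \alpha_2 \circ \mu \circ \alpha_1^{-1}$, a $G$-equivariant group isomorphism. Combining with Corollary \ref{corollary_Hopf_algeba_isomorphism} closes both directions.

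\textbf{The main obstacle} I anticipate is not the formal equivariance bookkeeping but pinning down precisely how Byott's translation theorem encodes the $G$-action, and in particular verifying that it is the $\Aut{N}$-component $\overline{\beta_i}$ (and not the full $\beta_i$, nor its $\rho(N)$-component) that governs the action $\ast$. Byott's bijection is phrased in terms of the stabilizer condition $\beta(G_L) = \mathrm{Stab}(e_N)$ and an identification of $\perm{X}$ with $\perm{N}$ via a regular embedding, and unwinding this to extract the action on grouplikes requires care: the $\rho(N)$-part of $\beta(g)$ contributes a translation that acts trivially by inner-type conjugation on $N$ when we restrict attention to the induced automorphism, so only the image in $\Hol{N}/\rho(N) \cong \Aut{N}$ survives. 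I would justify this by recalling that the $G$-action on $\alpha(N)$ is by conjugation inside $\perm{X}$ (i.e.\ $\sigma \ast \eta = \lambda(\sigma)\eta\lambda(\sigma)^{-1}$), transporting this conjugation through Byott's identification, and checking that it lands in $\Aut{N}$ as the composite $\overline{\beta}$. Once this compatibility is secured, the remainder of the proof is a direct translation requiring no further substantive input.
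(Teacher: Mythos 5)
Your proposal is correct and follows essentially the same route as the paper: reduce via Corollary \ref{corollary_Hopf_algeba_isomorphism} to the condition $ \alpha_{1}(N) \cong \alpha_{2}(N) $ as $ G $-groups, identify the conjugation action on $ \alpha_{i}(N) $ with the action $ g \ast_{i} \eta = \overline{\beta_{i}}(g)[\eta] $ on the abstract group $ N $, and then note that $ G $-equivariance of a group isomorphism $ \mu $ is precisely the conjugacy relation $ \overline{\beta_{2}}(g) = \mu \, \overline{\beta_{1}}(g) \, \mu^{-1} $. The one step you single out as the main obstacle --- that it is the $ \Aut{N} $-component of $ \beta_{i} $ which encodes the $ G $-action --- is exactly the point the paper dispatches by citing \cite[(7.7) Proposition]{Ch00} rather than by unwinding Byott's bijection by hand, and your sketch of that unwinding (the $ \rho(N) $-component conjugates trivially, so only the image in $ \Aut{N} $ survives) is sound.
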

\begin{proof}
By Theorem \ref{theorem_Hopf_algebra_isomorphism}, we have $ E[\alpha_{1}(N)]^{G} \cong E[\alpha_{2}(N)]^{G} $ as Hopf algebras if and only if $ (\alpha_{1}(N),\ast) \cong (\alpha_{2}(N),\ast) $ as $ G $-groups. For $ i=1,2 $ define an action $ \ast_{i} $ of $ G $ on $ N $ by $ g \ast_{i} \eta = \overline{\beta_{i}}(g) [ \eta ] $; then by \cite[(7.7) Proposition]{Ch00} we have that $ (\alpha_{i}(N), \ast) \cong (N, \ast_{i}) $ as $ G $-groups, and so $ (\alpha_{1}(N),\ast) \cong (\alpha_{2}(N),\ast) $ as $ G $-groups if and only if $ (N, \ast_{1}) \cong (N, \ast_{2}) $ as $ G $-groups. This occurs if and only if there exists $ \mu \in \Aut{N} $ such that 
\[ \mu(g \ast_{1} \eta) = g \ast_{2} \mu(\eta) \mbox{ for all } g \in G, \eta \in N,\]
that is, if and only if 
\[ \mu \overline{\beta_{1}}(g) = \overline{\beta_{2}}(g) \mu \mbox{ for all } g \in G. \]
\end{proof}

As a special case, we have

\begin{corollary} \label{corollary_Hopf_algebra_isomorphism_Aut_N_abelian}
If $ \Aut{N} $ is abelian (in particular, if $ N $ is cyclic), then $ E[\alpha_{1}(N)]^{G} \cong E[\alpha_{2}(N)]^{G} $ as Hopf algebras if and only if  $ \overline{\beta_{1}}(g) = \overline{\beta_{2}}(g) $ for all $ g \in G $. 
\end{corollary}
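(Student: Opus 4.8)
The corollary follows directly from Theorem~\ref{theorem_Hopf_algebra_isomorphism_holomorph} by specializing the conjugation condition. The plan is to start from the equivalent formulation obtained in the theorem, namely that $ E[\alpha_{1}(N)]^{G} \cong E[\alpha_{2}(N)]^{G} $ as Hopf algebras if and only if there exists $ \mu \in \Aut{N} $ with
\[ \overline{\beta_{2}}(g) = \mu \, \overline{\beta_{1}}(g) \, \mu^{-1} \mbox{ for all } g \in G. \]
The task is then simply to observe what this condition becomes when $ \Aut{N} $ is abelian.

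First I would note that $ \overline{\beta_{1}}(g) $, $ \overline{\beta_{2}}(g) $, and $ \mu $ all lie in $ \Aut{N} $, since each $ \overline{\beta_{i}} $ is the composition of $ \beta_{i} $ with the projection $ \Hol{N} = \rho(N) \rtimes \Aut{N} \twoheadrightarrow \Aut{N} $. Therefore the conjugation $ \mu \, \overline{\beta_{1}}(g) \, \mu^{-1} $ takes place entirely within the group $ \Aut{N} $. When $ \Aut{N} $ is abelian, conjugation is trivial, so $ \mu \, \overline{\beta_{1}}(g) \, \mu^{-1} = \overline{\beta_{1}}(g) $ for every $ \mu \in \Aut{N} $ and every $ g \in G $. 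Hence the existence of some $ \mu $ satisfying the conjugation condition is equivalent to the equality $ \overline{\beta_{2}}(g) = \overline{\beta_{1}}(g) $ holding for all $ g \in G $; indeed, if such a $ \mu $ exists then the conjugation collapses and gives this equality, while conversely if the equality holds then $ \mu = \mathrm{id}_{N} $ witnesses the condition.

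Finally I would record the parenthetical claim that a cyclic $ N $ satisfies the hypothesis: if $ N $ is cyclic then $ \Aut{N} $ is isomorphic to the unit group of $ \mathbb{Z}/|N|\mathbb{Z} $, which is abelian. Combining this with Theorem~\ref{theorem_Hopf_algebra_isomorphism_holomorph} yields the stated equivalence. There is no genuine obstacle here: the only content is the reduction of a conjugacy condition to an equality condition in an abelian group, so the proof amounts to a single sentence invoking commutativity of $ \Aut{N} $.
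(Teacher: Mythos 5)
Your proof is correct and follows exactly the route the paper intends: the paper states this corollary without proof as an immediate special case of Theorem~\ref{theorem_Hopf_algebra_isomorphism_holomorph}, and your argument---that commutativity of $\Aut{N}$ trivializes the conjugation condition $\overline{\beta_{2}}(g) = \mu\,\overline{\beta_{1}}(g)\,\mu^{-1}$, with $\mu = \mathrm{id}_{N}$ witnessing the converse---is precisely the omitted verification. The added remark that $\Aut{N} \cong (\mathbb{Z}/|N|\mathbb{Z})^{\times}$ for cyclic $N$ correctly justifies the parenthetical as well.
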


\begin{example}{\bf (The classical and canonical nonclassical Hopf-Galois structures revisited)}
Recall from Example \ref {example_Hopf_isomorphism_classical_nonclassical} that if $ L/K $ is a Galois extension with nonabelian group $ G $ then $ L/K $ admits at least two Hopf-Galois structures: the classical Hopf-Galois structure, which corresponds to the subgroup $ \rho(G) $ of $ \perm{G} $ and has Hopf algebra $ K[G] $, and the canonical nonclassical Hopf-Galois structure, which corresponds to the subgroup $ \lambda(G) $ of $ \perm{G} $ and has Hopf algebra $ H_{\lambda} = L[\lambda(G)]^{G} $. In the notation of this subsection, we may take $ N=G $ and view $ \lambda $ and $ \rho $ as embeddings of the abstract group $ G $ into $ \perm{G} $ whose images are $ G $-stable and regular. By following the details of the proof of Byott's translation theorem, we find that the embedding $ G \hookrightarrow \Hol{G} $ corresponding to $ \rho $ is $ \rho $ itself, and similarly for $ \lambda $. When we view $ \Hol{G} $ as $ \rho(G) \rtimes \Aut{G} $, we have $ \rho(G) = \{ (\rho(g),1) \mid g \in G \} $, whereas $ \lambda(G) = \{ (\rho(g^{-1}),c(g)) \mid g \in G \} $, where $ c(g) $ is the inner automorphism of $ G $ arising from conjugation by $ g $. Therefore $ \overline{\rho}(G) $ and $ \overline{\lambda}(G) $ have different orders, and so there cannot exist an automorphism $ \mu \in \Aut{G} $ with the properties required by Theorem \ref{theorem_Hopf_algebra_isomorphism_holomorph}. Hence we recover the fact that $ K[G] \not \cong H_{\lambda} $ as $ K $-Hopf algebras. 
\end{example}

\section{Algebra Isomorphisms for Commutative Structures} \label{section_algebra_isomorphisms_commutative}

In this section, we consider the question of when two Hopf algebras  $ E[N_{1}]^{G} $ and $ E[N_{2}]^{G} $ giving Hopf-Galois structures on $ L/K $ are isomorphic as $ K $-algebras. We shall assume that $ E[N_{1}]^{G} $ and $ E[N_{2}]^{G} $ are commutative algebras; this is equivalent to assuming that the underlying groups $ N_{1},N_{2} $ are abelian. Note, however, that we do not assume that these groups are isomorphic. We shall also assume that $ K $ has characteristic zero;  a consequence of this is that each $ E[N_{i}]^{G} $ is a semisimple $ K $-algebra and therefore by the Wedderburn-Artin theorem is isomorphic as a $ K $-algebra to a product of extension fields of $ K $. We establish a criterion for  $ E[N_{1}]^{G} \cong E[N_{2}]^{G} $ as $ K $-algebras. 
\\ \\
The question of explicitly determining the Wedderburn-Artin decomposition of a commutative Hopf algebra of this form has been answered by Boltje and Bley \cite{BleyBoltje} in terms of the action of the absolute Galois group of $ K $ on the values of characters of the underlying group. We review their approach using a finite extension of $ E $ in place of the separable closure of $ K $. Let $ e $ denote the least common multiple of the exponents of $ N_{1}, N_{2} $, let $ \zeta $ be a primitive $ e^{th} $ root of unity. Then $ K(\zeta) $ is a Galois extension of $ K $, and the compositum of $ E $ and $ K(\zeta) $, say $ \tilde{E} $, is a Galois extension of $ K $. Let $ \Gamma = \Gal{\tilde{E}/K} $ and $ \Gamma_{E} = \Gal{\tilde{E}/E} $, so that $ \Gamma / \Gamma_{E} \cong G $. 
\[
\xymatrixcolsep{4pc} 
\xymatrixrowsep{2pc}
\xymatrix{
&\tilde{E} \ar@{-}[ddd]_{\Gamma}  & \\
E \ar@{-}[d]_{G_{L}} \ar@{-}[ddr]^{G} \ar@{-}[ur]^{\Gamma_{E}}& &  \\
L \ar@{-}[dr]& & \ar@{-}[dl] \ar@{-}[uul]K(\zeta)\\
&K&  \\
} 
\]
For $ i=1,2 $ let $ \Gamma $ act on $ \tilde{E}[N_{i}] $ by acting on $ \tilde{E} $ as Galois automorphisms and on $ N_{i} $ by factoring through $ \Gamma / \Gamma_{E} $ (i.e. through $ G $). Then
\[ \tilde{E}[N_{i}]^{\Gamma} = \left( \tilde{E}[N_{i}]^{\Gamma_{E}} \right)^{\Gamma / \Gamma_{E}} = \left( \tilde{E}^{\Gamma_{E}}[N_{i}] \right)^{\Gamma / \Gamma_{E}} = E[N_{i}]^{G}. \]
(In the second term $ \Gamma_{E} $ acts trivially on $ N_{i} $, so we have $ \tilde{E}[N_{i}]^{\Gamma_{E}}  = \tilde{E}^{\Gamma_{E}}[N_{i}] = E[N_{i}] $, the group algebra of $ N_{i} $ with coefficients drawn from $ \tilde{E}^{\Gamma_{E}} = E $.) 
Now elements of $ \widehat{N_{i}} $, namely the characters of $ N_{i} $, have values which all lie in $ \tilde{E} $, and so $ \tilde{E}[N_{i}] $ is isomorphic to $ \tilde{E}^{n} $ as an $ \tilde{E} $-algebra via orthogonal idempotents, one corresponding to each character. Writing $ \widehat{N_{i}} $ for the dual group of $ N_{i} $, the idempotent corresponding to $ \chi \in \widehat{N_{i}} $ is 
\[ e_{\chi} = \sum_{\eta \in N_{i}} \chi(\eta) \eta \in \tilde{E}[N_{i}]. \]
This action of $ \Gamma $ on $ \tilde{E}[N_{i}] $ permutes the orthogonal idempotents, which in turn induces an action of $ \Gamma $ on $ \widehat{N_{i}} $, as follows: if $ \chi \in \widehat{N_{i}} $ and $ \gamma \in \Gamma $ then 
\begin{eqnarray*}
\gamma \ast e_{\chi} & = & \gamma \ast \left( \sum_{\eta \in N_{i}} \chi(\eta) \eta \right) \\
& = & \sum_{\eta \in N_{i}} \gamma(\chi(\eta)) \gamma \ast \eta \\
& = & \sum_{\eta \in N_{i}} \gamma(\chi(\gamma^{-1} \ast \eta)) \eta \mbox{ (reindexing).}
\end{eqnarray*}
We therefore define $ \gamma \ast \chi $ by $ (\gamma \ast \chi [\eta] = \gamma(\chi[\gamma^{-1} \ast \eta])  $ for all $ \eta \in N_{i} $. Now let $ \chi_{1}, \ldots ,\chi_{r} $ be representatives of the orbits of $ \Gamma $ in $ \widehat{N_{i}} $, and for each $ j=1, \ldots ,r $ let $ S_{j} = \mbox{Stab}(\chi_{j}) $. Then \cite[Lemma 2.2]{BleyBoltje} asserts that
\[ \tilde{E}[N_{i}]^{\Gamma} \cong \prod_{j=1}^{r} \tilde{E}^{S_{j}} \mbox{ as $ K $-algebras.} \]
From this, it is clear that if there exist sets of representatives of the orbits of $ \Gamma $ in $ \widehat{N_{1}} $ and $ \widehat{N_{2}} $ having the same stabilizers then $ \tilde{E}[N_{1}]^{\Gamma} \cong \tilde{E}[N_{2}]^{\Gamma} $ as $ K $-algebras. The following theorem is a refinement of this idea. Note that  as in section \ref{section_Hopf_algebra_isomorphisms} we formulate the theorem in more general terms than we require: the added flexibility will be useful later. 

\begin{theorem} \label{theorem_algebra_isomorphism}
Let $ (N_{1}, \ast_{1}) $ and $ (N_{2}, \ast_{2}) $ be abelian $ \Gamma $-groups and, for $ i=1,2 $, let $ \Gamma $ act on $ \tilde{E}[N_{i}] $ by acting on $ \tilde{E} $ as Galois automorphisms and on $ N_{i} $ via $ \ast_{i} $. Then $ \tilde{E}[N_{1}]^{\Gamma} \cong  \tilde{E}[N_{2}]^{\Gamma} $ as $ K $-algebras if and only if $ \widehat{N_{1}} \cong \widehat{N_{2}} $ as $ \Gamma $-sets.  
\end{theorem}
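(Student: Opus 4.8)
The plan is to build on the Wedderburn--Artin decomposition recorded immediately before the theorem statement. For $ i=1,2 $, let $ \chi_{1}^{(i)}, \ldots ,\chi_{r_{i}}^{(i)} $ be representatives for the orbits of $ \Gamma $ in $ \widehat{N_{i}} $, and set $ S_{j}^{(i)} = \mbox{Stab}(\chi_{j}^{(i)}) $. Then \cite[Lemma 2.2]{BleyBoltje} gives
\[ \tilde{E}[N_{i}]^{\Gamma} \cong \prod_{j=1}^{r_{i}} \tilde{E}^{S_{j}^{(i)}} \mbox{ as } K\mbox{-algebras}, \]
a product of subfields of $ \tilde{E} $. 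The whole argument consists of translating a $ K $-algebra isomorphism of these products into a $ \Gamma $-set isomorphism $ \widehat{N_{1}} \cong \widehat{N_{2}} $ and back.

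For the backward implication, suppose $ f : \widehat{N_{1}} \to \widehat{N_{2}} $ is a $ \Gamma $-equivariant bijection. As noted after the definition of $ \Gamma $-sets, $ f $ carries a set of orbit representatives to a set of orbit representatives and satisfies $ \mbox{Stab}(\chi) = \mbox{Stab}(f(\chi)) $ for every $ \chi $. Choosing $ \chi_{j}^{(2)} = f(\chi_{j}^{(1)}) $, we obtain $ r_{1} = r_{2} $ and $ S_{j}^{(1)} = S_{j}^{(2)} $ for all $ j $, so the two products above coincide factor by factor; hence $ \tilde{E}[N_{1}]^{\Gamma} \cong \tilde{E}[N_{2}]^{\Gamma} $ as $ K $-algebras.

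For the forward implication, I would argue in three steps. First, invoke the uniqueness of the Wedderburn--Artin decomposition of a commutative semisimple $ K $-algebra: a $ K $-algebra isomorphism $ \tilde{E}[N_{1}]^{\Gamma} \cong \tilde{E}[N_{2}]^{\Gamma} $ permutes the minimal idempotents and so matches the simple (field) factors, giving $ r_{1} = r_{2} =: r $ and, after renumbering, $ \tilde{E}^{S_{j}^{(1)}} \cong \tilde{E}^{S_{j}^{(2)}} $ as field extensions of $ K $ for each $ j $. Second, use that $ \tilde{E}/K $ is Galois with group $ \Gamma $: any $ K $-isomorphism between two fixed fields $ \tilde{E}^{S} $ and $ \tilde{E}^{S'} $ extends (by normality of $ \tilde{E}/K $) to an element of $ \Gamma $, whence $ \tilde{E}^{S} \cong \tilde{E}^{S'} $ over $ K $ if and only if $ S $ and $ S' $ are conjugate in $ \Gamma $. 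Thus the matching of factors furnishes, for each $ j $, an element $ \gamma_{j} \in \Gamma $ with $ S_{j}^{(2)} = \gamma_{j} S_{j}^{(1)} \gamma_{j}^{-1} $.

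Finally, I would assemble these data into a $ \Gamma $-set isomorphism. A finite $ \Gamma $-set is the disjoint union of its orbits, each orbit $ \Gamma \ast \chi_{j}^{(i)} $ being isomorphic as a $ \Gamma $-set to the coset space $ \Gamma / S_{j}^{(i)} $, and $ \Gamma/S \cong \Gamma/S' $ as $ \Gamma $-sets precisely when $ S $ and $ S' $ are conjugate in $ \Gamma $. The relation $ S_{j}^{(2)} = \gamma_{j} S_{j}^{(1)} \gamma_{j}^{-1} $ therefore yields a $ \Gamma $-equivariant bijection $ \Gamma \ast \chi_{j}^{(1)} \to \Gamma \ast \chi_{j}^{(2)} $ for each $ j $, and taking the disjoint union over $ j $ produces the desired isomorphism $ \widehat{N_{1}} \cong \widehat{N_{2}} $ of $ \Gamma $-sets. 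The main obstacle is the forward direction, and within it the passage from an abstract $ K $-algebra isomorphism to conjugacy of the stabilizers: this is where the uniqueness of the Wedderburn decomposition and the Galois-theoretic fact that $ K $-isomorphic subfields of $ \tilde{E} $ have conjugate fixed groups do the essential work, the remaining combinatorics of $ \Gamma $-sets being routine.
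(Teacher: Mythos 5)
Your proof is correct, but your forward direction takes a genuinely different route from the paper's. The backward direction (a $\Gamma$-equivariant bijection implies a $K$-algebra isomorphism) coincides with the paper's argument: orbit representatives map to orbit representatives with equal stabilizers, so the Bley--Boltje decompositions agree factor by factor. For the converse, however, the paper never invokes Wedderburn uniqueness: it extends the given $K$-algebra isomorphism $\varphi$ by base change to an $\tilde{E}$-algebra isomorphism $\tilde{E}[N_{1}] \rightarrow \tilde{E}[N_{2}]$, shows this extension is $\Gamma$-equivariant by the same basis computation used in the proof of Theorem \ref{theorem_Hopf_algebra_isomorphism}, and then reads off a bijection $\psi : \widehat{N_{1}} \rightarrow \widehat{N_{2}}$ from the way the extended map permutes the primitive idempotents $e_{\chi}$; equivariance of $\psi$ is inherited at once, and in fact stabilizers are preserved exactly. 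You instead stay over $K$ throughout: uniqueness of the Wedderburn--Artin decomposition matches the field factors, the isomorphism extension theorem for the Galois extension $\tilde{E}/K$ converts $K$-isomorphism of fixed fields $\tilde{E}^{S} \cong \tilde{E}^{S'}$ into conjugacy of $S$ and $S'$ in $\Gamma$, and the classification of transitive $\Gamma$-sets ($\Gamma/S \cong \Gamma/S'$ as $\Gamma$-sets if and only if $S$, $S'$ are conjugate) assembles the $\Gamma$-set isomorphism orbit by orbit. Your route buys independence from the descent machinery (no need to verify equivariance of a base-changed map) and makes transparent that only conjugacy of stabilizers, not equality, is required; the paper's route buys a canonical equivariant bijection induced directly by $\varphi$ and reuses an argument already established earlier, which is why it is shorter on the page. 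Both are complete proofs of the stated equivalence.
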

\begin{proof}
Suppose first that $ \psi  : \widehat{N_{1}} \rightarrow \widehat{N_{2}} $ is a $ \Gamma $-equivariant bijection. Let $ \chi_{1}, \ldots , \chi_{r} $ be representatives of the orbits of $ \Gamma $ in $ \widehat{N_{1}} $. Then $ \psi(\chi_{1}), \ldots , \psi(\chi_{r}) $ are representatives of the orbits of $ \Gamma $ in $ \widehat{N_{2}} $, and for each $ i = 1, \ldots ,r $ we have
\begin{eqnarray*}
\gamma \in \mbox{Stab}(\psi(\chi_{i})) & \Leftrightarrow & \gamma \ast \psi(\chi_{i}) = \psi(\chi_{i}) \\
& \Leftrightarrow & \psi(\gamma \ast \chi_{i}) = \psi(\chi_{i}) \mbox{ ($\psi$ is $ \Gamma $-equivariant)}\\
& \Leftrightarrow &\gamma \ast \chi_{i} = \chi_{i} \mbox{ ($\psi$ is a bijection)}\\
& \Leftrightarrow & \gamma \in \mbox{Stab}(\chi_{i}). 
\end{eqnarray*}
Therefore $ \mbox{Stab}(\psi(\chi_{i})) = \mbox{Stab}(\chi_{i}) $ for each $ i $, and so 
\[ H_{1} \cong \prod_{i=1}^{r} \tilde{E}^{\text{Stab}(\chi_{i})} = \prod_{i=1}^{r} \tilde{E}^{\text{Stab}(\psi(\chi_{i}))} \cong H_{2} \mbox{ as $ K $-algebras}. \]
Conversely, suppose that $ \varphi : H_{1} \xrightarrow{\sim} H_{2} $ is an isomorphism of $ K $-algebras. It extends to an isomorphism of $ \tilde{E} $-algebras $ \varphi : \tilde{E}[N_{1}] \cong \tilde{E}[N_{2}] $ which, by the argument employed in the proof of Theorem \ref{theorem_Hopf_algebra_isomorphism}, is $ \Gamma $-equivariant. 
Now recall that $ \tilde{E}[N_{1}] \cong \tilde{E}[N_{2}]  \cong \tilde{E}^{n} $ as $ \tilde{E} $-algebras via orthogonal idempotents, as described above. Let $ \{ e_{\chi} \mid \chi \in \widehat{N_{1}} \} $ be the orthogonal idempotents of $ \tilde{E}[N_{1}] $, and $ \{ f_{\chi^{\prime}} \mid \chi^{\prime} \in \widehat{N_{2}} \} $ be the orthogonal idempotents of $ \tilde{E}[N_{2}] $. The set $ \{ \varphi(e_{\chi}) \mid \chi \in \widehat{N_{1}} \} $ is the $ \tilde{E} $-basis of orthogonal idempotents of $ \tilde{E}[N_{2}] $, and so for each $  \chi \in \widehat{N_{1}} $ there exists $ \psi(\chi) \in \widehat{N_{2}} $ such that $ \varphi(e_{\chi}) = f_{\psi(\chi)} $. This establishes a bijection $ \psi : \widehat{N_{1}} \rightarrow \widehat{N_{2}} $. Since $ \varphi $ is $ \Gamma $-equivariant, for $ \gamma \in \Gamma $ we have:
\[ f_{(\gamma \ast\psi)(\chi)} = \gamma \ast f_{\psi(\chi)} = \gamma \ast \varphi( e_{\chi} ) =  \varphi(\gamma \ast e_{\chi} ) = \varphi( e_{\gamma \ast \chi} ) = f_{\psi(\gamma \ast \chi)}, \]
and so $ (\gamma \ast \psi)(\chi) = \psi(\gamma \ast \chi) $. Therefore the bijection $ \psi : \widehat{N_{1}} \rightarrow \widehat{N_{2}} $ is $ \Gamma $-equivariant. 
\end{proof}

The following corollary is the principal application of Theorem \ref{theorem_algebra_isomorphism}. 

\begin{corollary} \label{corollary_algebra_isomorphism}
Let $ E[N_{1}]^{G} $ and $ E[N_{2}]^{G} $ be commutative Hopf algebras giving Hopf-Galois structures on $ L/K $. Then $ E[N_{1}]^{G} \cong E[N_{2}]^{G} $ as $ K $-algebras if and only if $ \widehat{N_{1}} \cong \widehat{N_{2}} $ as $ \Gamma $-sets. 
\end{corollary}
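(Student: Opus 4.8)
The plan is to deduce this corollary directly from the general result Theorem \ref{theorem_algebra_isomorphism}, which has already been established, by verifying that the Greither--Pareigis setting is a special case of the abstract setup used there. The only work required is to match up the hypotheses and to invoke the identification $ E[N_i]^{G} = \tilde{E}[N_i]^{\Gamma} $ that was derived in the discussion preceding the theorem.

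First I would record that the assumption that $ E[N_1]^{G} $ and $ E[N_2]^{G} $ are commutative is, as noted at the start of this section, precisely the assumption that the groups $ N_1 $ and $ N_2 $ are abelian. Next I would promote each $ N_i $ from a $ G $-group to a $ \Gamma $-group: the action $ \ast $ of $ G $ on $ N_i $ is inflated along the canonical surjection $ \Gamma \twoheadrightarrow \Gamma / \Gamma_{E} \cong G $ (equivalently, $ \Gamma $ acts on $ N_i $ by factoring through $ G $, exactly as stipulated when $ \Gamma $ was made to act on $ \tilde{E}[N_i] $). Since each $ N_i $ is abelian and $ \Gamma $ acts through automorphisms, $ (N_1, \ast) $ and $ (N_2, \ast) $ are abelian $ \Gamma $-groups, so the hypotheses of Theorem \ref{theorem_algebra_isomorphism} are met.

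With this in place, I would invoke the chain of Galois-descent identifications displayed earlier,
\[ \tilde{E}[N_i]^{\Gamma} = \left( \tilde{E}[N_i]^{\Gamma_{E}} \right)^{\Gamma/\Gamma_{E}} = \left( \tilde{E}^{\Gamma_{E}}[N_i] \right)^{\Gamma/\Gamma_{E}} = E[N_i]^{G}, \]
which tells us that the $ K $-algebra $ E[N_i]^{G} $ coincides with the object $ \tilde{E}[N_i]^{\Gamma} $ appearing in Theorem \ref{theorem_algebra_isomorphism}. Consequently $ E[N_1]^{G} \cong E[N_2]^{G} $ as $ K $-algebras if and only if $ \tilde{E}[N_1]^{\Gamma} \cong \tilde{E}[N_2]^{\Gamma} $ as $ K $-algebras, and Theorem \ref{theorem_algebra_isomorphism} identifies this last condition with $ \widehat{N_1} \cong \widehat{N_2} $ as $ \Gamma $-sets.

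I do not anticipate a genuine obstacle here, since every ingredient has been prepared in advance; the corollary is essentially a translation of Theorem \ref{theorem_algebra_isomorphism} back into the language of the theorem of Greither and Pareigis. The one point deserving a sentence of care is that the $ \Gamma $-action used to form $ \tilde{E}[N_i]^{\Gamma} $ is the inflation of the $ G $-action through $ \Gamma / \Gamma_{E} $, so that the $ \Gamma $-set structure on $ \widehat{N_i} $ is the one defined just before the theorem; once this is noted, the equivalence follows immediately.
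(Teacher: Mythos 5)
Your proof is correct and matches the paper's intent exactly: the paper offers no separate argument for this corollary, treating it as an immediate specialization of Theorem \ref{theorem_algebra_isomorphism} via the identification $ \tilde{E}[N_{i}]^{\Gamma} = E[N_{i}]^{G} $ established in the discussion preceding the theorem, which is precisely the chain of identifications you invoke. Your added care in noting that the $ \Gamma $-action is the inflation of the $ G $-action through $ \Gamma/\Gamma_{E} $ is exactly the point the paper relies on implicitly.
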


\begin{example}{\bf (Elementary abelian extensions of degree $ p^{2} $ revisited) }
Let $ p $ be an odd prime and let $ L/K $ be an elementary abelian extension of degree $ p^{2} $ with group $ G $. In Example \ref{example_Hopf_isomorphism_elementary_abelian_p2} we determined criteria for two Hopf algebras $ H_{1}, H_{2} $ giving Hopf-Galois structures on $ L/K $ to be isomorphic as $ K $-Hopf algebras. Under the additional hypotheses that $ K $ has characteristic zero and contains a primitive $ p^{th} $ root of unity $ \zeta $, we now apply Corollary \ref{corollary_algebra_isomorphism} to determine criteria for them to be isomorphic as $ K $-algebras. In fact, we show that in this case $ H_{1} \cong H_{2} $ as $ K $-algebras if and only if $ H_{1} \cong H_{2} $ as $ K $-Hopf algebras. 
\\ \\
Recall from Example \ref{example_Hopf_isomorphism_elementary_abelian_p2} a Hopf algebra $ H $ giving a Hopf-Galois structures on $ L/K $ determined by a choice of subgroup $ T $ of degree $ p $ and an integer $ d \in \{0, \ldots ,p-1\} $; the Hopf algebra is then $ L[N]^{G} $, where $ N $ is the subgroup of $ \perm{G} $ generated by two permutations $ \alpha, \beta $ as described in Equations \eqref{equation_byott_p2_permutations}, and the action of $ G $ on $ N $ is as described in Equation \eqref{equation_byott_p2_lambda_action}. The dual group $ \widehat{N} $ is therefore generated by two characters $ \chi, \psi $, defined as follows:
\[ \chi(\alpha) = \zeta , \hspace{4mm} \chi(\beta) = 1 , \hspace{4mm} \psi(\alpha) = 1 , \hspace{4mm} \psi(\beta) = \zeta. \]
Since $ L/K $ is a Galois extension we have $ E = L $, and the hypothesis that $ K $ contains a primitive $ p^{th} $ root of unity implies that in the notation of this section we have $ \tilde{E}=E=L $ and $ \Gamma = G $. The action of $ G $ on $ N $ described in Equation \eqref{equation_byott_p2_lambda_action} translates into an action of $ G $ on $ \widehat{N} $ by 
\begin{equation} \label{equation_byott_p2_lambda_action_dual}
 s \ast \chi = \chi \psi^{d^{-1}}, \hspace{4mm} t \ast \chi = \chi, \hspace{4mm} s \ast \psi = \psi, \hspace{4mm} t \ast \psi=\psi,
\end{equation}
where $ d^{-1} $ is to be interpreted modulo $ p $. Now let $ H_{1}, H_{2} $ be two such Hopf algebras giving distinct Hopf-Galois structures on $ L/K $, with underlying groups $ N_{1}, N_{2} $ as in Example \ref{example_Hopf_isomorphism_elementary_abelian_p2}. We claim that $ H_{1} \cong H_{2} $ as $ K $-algebras if and only if $ d_{1}=d_{2}=0 $ or $ d_{1}d_{2} \neq 0 $ and $ T_{1}=T_{2} $; these are identical to the conditions we derived in Example \ref{example_Hopf_isomorphism_elementary_abelian_p2} for $ H_{1} \cong H_{2} $ as $ K $-Hopf algebras. If these conditions are satisfied then $ H_{1} \cong H_{2} $ as $ K $-Hopf algebras, so certainly as $ K $-algebras. For the converse, note that by \eqref{equation_byott_p2_lambda_action_dual} if $ d_{i}=0 $ then $ G $ acts trivially on $ \widehat{N_{i}} $, whereas if $ T $ is a subgroup of $ G $ of order $ p $ and $ d \neq 0 $ then the kernel of the action of $ G $ on $ N_{T,d} $ is precisely $ T $. Therefore if $ \widehat{N_{1}} \cong \widehat{N_{2}} $ as $ G $-sets then we must have $ d_{1} = d_{2} = 0 $ or $ d_{1}d_{2} \neq 0 $ and $ T_{1} = T_{2} $. 
\\ \\
We remark that if one determines a set of representatives for the orbits of $ \Gamma $ in $ \widehat{N_{i}} $, and the associated stabilizers, then one finds that
$ H_{i} \cong K^{p} \times \left(L^{T_{i}}\right)^{p-1} $ as $ K $-algebras; this result was obtained by slightly different methods in \cite[Proposition 3.4]{PJT_CpxCp}. 
\end{example}

\subsection{Algebra isomorphisms via the holomorph}

Hitherto in this section we have not assumed that $ N_{1}, N_{2} $ are isomorphic as groups; we now impose this assumption. We can therefore view $ N_{1}, N_{2} $ as the images of a single abstract abelian group $ N $ under two embeddings $ \alpha_{1}, \alpha_{2} : N \hookrightarrow \perm{X} $, as in subsection \ref{subsection_Hopf_algebra_isomorphisms_holomorph}. We recall from that subsection that By Byott's translation theorem these embeddings correspond to embeddings $ \beta_{1}, \beta_{2} : G \hookrightarrow \Hol{N} = \rho(N) \rtimes \Aut{N} $, and we write $ \overline{\beta_{1}}, \overline{\beta_{2}} $ for the compositions of $ \beta_{1}, \beta_{2} $ with the projection onto the $ \Aut{N} $ component. We have seen in Theorem \ref{theorem_Hopf_algebra_isomorphism_holomorph} that it is possible to detect when $ E[\alpha_{1}(N)]^{G} \cong E[\alpha_{2}(N)]^{G} $ as $ K $-Hopf algebras by studying properties of $ \overline{\beta_{1}} , \overline{\beta_{2}} $. We shall show that in our situation these maps also allow us to detect $ K $-algebra isomorphisms.
\\ \\
As in the proof of Theorem \ref{theorem_Hopf_algebra_isomorphism_holomorph}, for $ i=1,2 $ define an action $ \ast_{i} $ of $ G $ on $ N $ by $ g \ast_{i} \eta = \overline{\beta_{i}}(g) [ \eta ] $; then by \cite[(7.7)]{Ch00} we have that $ (\alpha_{i}(N), \ast) \cong (N, \ast_{i}) $ as $ G $-groups. We may extend these to actions of $ \Gamma $ by factoring through $ G $, obtaining $ (\alpha_{i}(N), \ast) \cong (N, \ast_{i}) $ as $ \Gamma $-groups.  Each of the actions of $ \Gamma $ on $ N $ yields a dual action of $ \Gamma $ on $ \widehat{N} $, which we also denote by $ \ast_{i} $. Similarly, the action of $ \Gamma $ on each $ \alpha_{i}(N) $ yields a dual action $ \ast $ of $ \Gamma $ on each $ \widehat{\alpha_{i}(N)} $. Then we have:

\begin{lemma} \label{lemma_dual_equivairant_isomorphism}
For $ i=1,2 $, we have $ (\widehat{\alpha_{i}(N)}, \ast) \cong (\widehat{N}, \ast_{i}) $ as $ \Gamma $-groups. 
\end{lemma}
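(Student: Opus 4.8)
The plan is to obtain the desired isomorphism by simply dualizing the $\Gamma$-group isomorphism we have already been handed. By the extension of \cite[(7.7) Proposition]{Ch00} to $\Gamma$ noted immediately before the statement, for each $i$ there is a $\Gamma$-equivariant group isomorphism $f : (N,\ast_{i}) \xrightarrow{\sim} (\alpha_{i}(N),\ast)$. I would recall that for finite abelian groups the character (Pontryagin) dual is a contravariant functor sending an isomorphism $f$ to the isomorphism $\widehat{f} : \widehat{\alpha_{i}(N)} \rightarrow \widehat{N}$ defined by $\widehat{f}(\xi) = \xi \circ f$; since $f$ is bijective, so is $\widehat{f}$, and it is plainly a homomorphism of abelian groups. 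The entire content of the lemma is then to check that $\widehat{f}$ intertwines the two dual $\Gamma$-actions, so I would spend the bulk of the argument on that verification.

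The heart of the proof is to show that $\widehat{f}(\gamma \ast \xi) = \gamma \ast_{i} \widehat{f}(\xi)$ for all $\gamma \in \Gamma$ and all $\xi \in \widehat{\alpha_{i}(N)}$, which I would do by evaluating both sides at an arbitrary $\eta \in N$ and applying the definition $(\gamma \ast \chi)[x] = \gamma(\chi[\gamma^{-1} \ast x])$ of the dual action on each factor. Expanding the left-hand side gives $\widehat{f}(\gamma \ast \xi)[\eta] = (\gamma \ast \xi)[f(\eta)] = \gamma(\xi[\gamma^{-1} \ast f(\eta)])$; the $\Gamma$-equivariance of $f$ lets me replace $\gamma^{-1} \ast f(\eta)$ by $f(\gamma^{-1} \ast_{i} \eta)$, so this becomes $\gamma(\widehat{f}(\xi)[\gamma^{-1} \ast_{i} \eta])$, which is exactly $(\gamma \ast_{i} \widehat{f}(\xi))[\eta]$. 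Along the way I would observe that each dual action genuinely is by group automorphisms, so that $\widehat{N}$ and $\widehat{\alpha_{i}(N)}$ really are $\Gamma$-groups: this holds because each $\gamma \in \Gamma$ acts as a field automorphism and hence multiplicatively on character values. Finally I would remark that the cases $i=1$ and $i=2$ are identical, completing the proof.

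The main obstacle is entirely bookkeeping rather than anything substantive. One must keep straight that dualization is contravariant, so that $f : N \rightarrow \alpha_{i}(N)$ produces $\widehat{f}$ running in the opposite direction, and one must match the Galois twist appearing in the dual action against the application of $\Gamma$-equivariance of $f$ at $\gamma^{-1}$ rather than at $\gamma$. Once the indices and the direction of $f$ are tracked correctly, the computation closes on the nose, and no further input beyond the given $\Gamma$-group isomorphism and the functoriality of the dual is required.
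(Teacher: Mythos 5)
Your proof is correct and is essentially the paper's own argument: the paper likewise dualizes the $\Gamma$-equivariant isomorphism $\alpha_{i} : (N,\ast_{i}) \xrightarrow{\sim} (\alpha_{i}(N),\ast)$, defining $\psi(\chi)[\alpha_{i}(\eta)] = \chi[\eta]$ (the inverse of your $\widehat{f}$), and the key equivariance computation is the same in both. The only cosmetic differences are the direction of the map (immaterial, since isomorphism is symmetric) and that you invoke functoriality of the character dual for bijectivity and multiplicativity where the paper verifies these by hand.
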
 
\begin{proof}
For each $ i=1,2 $, the map $ \alpha_{i} : (N,\ast_{i}) \rightarrow (\alpha_{i}(N),\ast) $ is a $ \Gamma $-equivariant isomorphism. For each $ \chi \in \widehat{N} $, the function $ \psi_{\chi} : \alpha_{i}(N) \rightarrow \tilde{E} $ defined by $ \psi_{\chi}[\alpha_{i}(\eta)] = \chi[\eta] $ for all $ \eta \in N $ is actually a character of $ \alpha_{i}(N) $, since for $ \eta,\eta^{\prime} \in N $ we have
\begin{eqnarray*}
\psi_{\chi}[\alpha_{i}(\eta)\alpha_{i}(\eta^{\prime})] & = & \psi_{\chi}[\alpha_{i}(\eta\eta^{\prime})] \\
& = & \chi[\eta\eta^{\prime}] \\
& = & \chi[\eta]\chi[\eta^{\prime}] \\
& = & \psi_{\chi}[\alpha_{i}(\eta)]\psi_{\chi}[\alpha_{i}(\eta^{\prime})].
\end{eqnarray*}
We shall show that the map $ \psi : \widehat{N} \rightarrow \widehat{\alpha_{i}(N)} $ defined by $ \psi(\chi) = \psi_{\chi} $ is a $ \Gamma $-equivariant isomorphism. To show that it is an injection, let $ \chi, \chi^{\prime} \in \widehat{N} $. Then
\begin{eqnarray*}
\psi(\chi) = \psi(\chi^{\prime}) & \Rightarrow & \psi(\chi)[\alpha_{i}(\eta)] = \psi(\chi^{\prime})[\alpha_{i}(\eta)] \mbox{ for all $ \eta \in N $} \\
& \Rightarrow & \chi(\eta) = \chi^{\prime}(\eta) \mbox{ for all $ \eta \in N $}\\
& \Rightarrow & \chi = \chi^{\prime}. 
\end{eqnarray*}
Thus $ \psi $ is an injection and, since $ |\widehat{N}| = |\widehat{\alpha_{i}(N)}| $, therefore a bijection. Next we show that $ \psi $ is a homomorphism. Let $ \chi, \chi^{\prime} $ be as above; then for all $ \eta \in N $ we have
\begin{eqnarray*}
\psi(\chi \chi^{\prime})[\alpha_{i}(\eta)] & = & (\chi \chi^{\prime})[\eta] \\
& = & \chi[\eta] \chi^{\prime}[\eta] \\
& = & \psi(\chi)[\alpha_{i}(\eta)] \psi(\chi^{\prime})[\alpha_{i}(\eta)] \\
& = & (\psi(\chi)\psi(\chi^{\prime}))[\alpha_{i}(\eta)].
\end{eqnarray*}
Hence $ \psi(\chi \chi^{\prime}) = \psi(\chi)\psi(\chi^{\prime}) $, and so $ \psi $ is a homomorphism. Finally, we show that $ \psi $ is $ \Gamma $-equivariant. Let $ \chi \in \widehat{N} $ and $ \gamma \in \Gamma $; then for all $ \eta \in N $ we have
\begin{eqnarray*}
(\gamma \ast \psi(\chi))[\alpha_{i}(\eta)] & = & \gamma(\psi(\chi)[\gamma^{-1} \ast \alpha_{i}(\eta)]) \\
& = & \gamma(\psi(\chi)[\alpha_{i}(\gamma^{-1} \ast_{i} \eta)])  \\
& = & \gamma(\chi[\gamma^{-1} \ast_{i} \eta]) \\
& = & (\gamma \ast_{i} \chi)[\eta] \\
& = & (\psi(\gamma \ast \chi))[\alpha_{i}(\eta)]. 
\end{eqnarray*}
Therefore $ \gamma \ast \psi(\chi) = \psi(\gamma \ast \chi) $, and so $ \psi $ is $ \Gamma $-equivariant.  
\\ \\
We have shown that $ \psi : \widehat{N} \rightarrow \widehat{\alpha_{i}(N)} $ is a $ \Gamma $-equivariant isomorphism, and so $ (\widehat{\alpha_{i}(N)}, \ast) \cong (\widehat{N}, \ast_{i}) $ as $ \Gamma $-groups. 
\end{proof}

\begin{theorem} \label{theorem_algebra_isomorphism_holomorph}
We have $ E[\alpha_{1}(N)]^{G} \cong E[\alpha_{2}(N)]^{G} $ as $ K $-algebras if and only if $ (\widehat{N},\ast_{1}) \cong (\widehat{N},\ast_{2}) $ as $ \Gamma $-sets. 
\end{theorem}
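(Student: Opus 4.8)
The plan is to reduce everything to the two results already at our disposal: Corollary \ref{corollary_algebra_isomorphism}, which characterises $ K $-algebra isomorphisms between commutative Hopf algebras of the form $ E[N_{i}]^{G} $ in terms of $ \Gamma $-set isomorphisms of the dual groups, and Lemma \ref{lemma_dual_equivairant_isomorphism}, which transports the dual of the permutation image $ \alpha_{i}(N) $ back to the dual of the abstract group $ N $ carrying the action $ \ast_{i} $. Since $ N $ is abelian, each $ \alpha_{i}(N) $ is an abelian regular subgroup of $ \perm{X} $ normalized by $ \lambda(G) $, so $ E[\alpha_{1}(N)]^{G} $ and $ E[\alpha_{2}(N)]^{G} $ are commutative Hopf algebras giving Hopf-Galois structures on $ L/K $, and Corollary \ref{corollary_algebra_isomorphism} applies directly to them.

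First I would apply Corollary \ref{corollary_algebra_isomorphism} with $ N_{1} = \alpha_{1}(N) $ and $ N_{2} = \alpha_{2}(N) $. This yields that $ E[\alpha_{1}(N)]^{G} \cong E[\alpha_{2}(N)]^{G} $ as $ K $-algebras if and only if $ (\widehat{\alpha_{1}(N)}, \ast) \cong (\widehat{\alpha_{2}(N)}, \ast) $ as $ \Gamma $-sets, where $ \ast $ denotes the dual of the action of $ \Gamma $ on each $ \alpha_{i}(N) $ (factoring through $ G $) described in the preamble to Theorem \ref{theorem_algebra_isomorphism}.

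Next I would invoke Lemma \ref{lemma_dual_equivairant_isomorphism}, which supplies $ \Gamma $-equivariant isomorphisms $ (\widehat{\alpha_{i}(N)}, \ast) \cong (\widehat{N}, \ast_{i}) $ for $ i=1,2 $; in particular these are isomorphisms of $ \Gamma $-sets. Composing them, $ (\widehat{\alpha_{1}(N)}, \ast) \cong (\widehat{\alpha_{2}(N)}, \ast) $ as $ \Gamma $-sets if and only if $ (\widehat{N}, \ast_{1}) \cong (\widehat{N}, \ast_{2}) $ as $ \Gamma $-sets. Chaining this with the equivalence from the previous step gives exactly the statement of the theorem.

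The argument is thus a formal composition of two established equivalences, so I anticipate no genuine obstacle; the only points requiring care are bookkeeping ones, namely checking that the dual action $ \ast $ on $ \widehat{\alpha_{i}(N)} $ appearing in Corollary \ref{corollary_algebra_isomorphism} is literally the same action to which Lemma \ref{lemma_dual_equivairant_isomorphism} refers, and observing that a $ \Gamma $-group isomorphism is in particular a $ \Gamma $-set isomorphism, so that the dual groups, which are defined since $ N $ is abelian, may legitimately be compared purely as $ \Gamma $-sets.
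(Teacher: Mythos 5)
Your proposal is correct and follows essentially the same route as the paper: the paper's proof applies Theorem \ref{theorem_algebra_isomorphism} (of which Corollary \ref{corollary_algebra_isomorphism} is just the specialisation to Hopf algebras arising from Hopf-Galois structures, via the identification $\tilde{E}[\alpha_{i}(N)]^{\Gamma} = E[\alpha_{i}(N)]^{G}$ made in the section's preamble) and then invokes Lemma \ref{lemma_dual_equivairant_isomorphism} to pass from $(\widehat{\alpha_{i}(N)},\ast)$ to $(\widehat{N},\ast_{i})$, exactly as you do. Your bookkeeping remarks about matching the dual actions and weakening $\Gamma$-group isomorphisms to $\Gamma$-set isomorphisms are precisely the points implicit in the paper's two-line argument.
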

\begin{proof}
By Theorem \ref{theorem_algebra_isomorphism} we have $ \tilde{E}[\alpha_{1}(N)]^{\Gamma} \cong  \tilde{E}[\alpha_{2}(N)]^{\Gamma} $ as $ K $-algebras if and only if $ (\widehat{\alpha_{1}(N)},\ast) \cong (\widehat{\alpha_{2}(N)},\ast) $ as $ \Gamma $-sets, and by Lemma \ref{lemma_dual_equivairant_isomorphism} this occurs if and only if $ (\widehat{N},\ast_{1}) \cong (\widehat{N},\ast_{2}) $ as $ \Gamma $-sets.
\end{proof}

The following corollary shows that for certain structures of cyclic type we can dispense with $ \widehat{N} $ and work directly with $ N $: 

\begin{corollary} \label{corollary_algebra_isomorphism_cyclic}
Suppose that $ N $ is cyclic and that $ K $ contains a primitive $ |N|^{th} $ root of unity $ \zeta $. Then the following are equivalent:
\begin{enumerate}
\item $ E[\alpha_{1}(N)]^{G} \cong E[\alpha_{2}(N)]^{G} $ as $ K $-algebras;
\item $ (\widehat{\alpha_{1}(N)},\ast) \cong (\widehat{\alpha_{2}(N)},\ast) $ as $ \Gamma $-sets. 
\item $ (\widehat{N},\ast_{1}) \cong (\widehat{N},\ast_{2}) $ as $ \Gamma $-sets;
\item $ (\alpha_{1}(N),\ast) \cong (\alpha_{2}(N),\ast) $ as $ \Gamma $-sets. 
\item $ (N,\ast_{1}) \cong (N,\ast_{2}) $ as $ \Gamma $-sets;
\end{enumerate}
\end{corollary}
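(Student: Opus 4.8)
The plan is to split the five conditions into two clusters that are already equivalent by results proved above, and then to bridge the clusters using the cyclic hypothesis. First I would observe that, \emph{without} using that $ N $ is cyclic, conditions (1), (2), (3) are all equivalent: (1)$\Leftrightarrow$(3) is exactly Theorem \ref{theorem_algebra_isomorphism_holomorph}; (1)$\Leftrightarrow$(2) follows from Theorem \ref{theorem_algebra_isomorphism} applied with $ N_{i}=\alpha_{i}(N) $, recalling that $ \tilde{E}[\alpha_{i}(N)]^{\Gamma}=E[\alpha_{i}(N)]^{G} $; and (2)$\Leftrightarrow$(3) follows from Lemma \ref{lemma_dual_equivairant_isomorphism}, which gives $ (\widehat{\alpha_{i}(N)},\ast)\cong(\widehat{N},\ast_{i}) $ as $ \Gamma $-groups, hence as $ \Gamma $-sets, for $ i=1,2 $. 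Likewise (4)$\Leftrightarrow$(5), again with no extra hypothesis, because $ (\alpha_{i}(N),\ast)\cong(N,\ast_{i}) $ as $ \Gamma $-groups by \cite[(7.7)]{Ch00} as recalled before the Lemma. Thus it remains only to link the two clusters, and for this it suffices to prove, say, (3)$\Leftrightarrow$(5).

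The key step, and the only place where the hypotheses on $ N $ and $ \zeta $ are used, is to show that $ (N,\ast_{i})\cong(\widehat{N},\ast_{i}) $ as $ \Gamma $-sets for each $ i $; granting this, $ (N,\ast_{1})\cong(N,\ast_{2}) $ as $ \Gamma $-sets if and only if $ (\widehat{N},\ast_{1})\cong(\widehat{N},\ast_{2}) $ as $ \Gamma $-sets, which is exactly (5)$\Leftrightarrow$(3). To prove this I would argue as follows. Since $ N $ is cyclic of order $ n=|N| $ and $ \zeta\in K $, every character of $ N $ takes values in $ \mu_{n}\subseteq K $, so $ \Gamma $ fixes all character values. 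The dual action therefore collapses to $ (\gamma\ast\chi)[\eta]=\chi[\gamma^{-1}\ast\eta] $, so that $ \gamma $ acts on $ \widehat{N} $ by the contragredient of its action $ \phi_{\gamma}\in\Aut{N} $ on $ N $. Identifying $ N\cong\widehat{N}\cong\mathbb{Z}/n\mathbb{Z} $ and $ \Aut{N}\cong(\mathbb{Z}/n\mathbb{Z})^{\times} $, and writing $ u_{\gamma} $ for the unit by which $ \gamma $ acts, $ \Gamma $ acts on $ N $ by multiplication by $ u_{\gamma} $ and on $ \widehat{N} $ by multiplication by $ u_{\gamma}^{-1} $.

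I would then compare orbits and stabilizers for these two actions directly. For any $ x $ we have $ u_{\gamma}x=x $ if and only if $ u_{\gamma}^{-1}x=x $, so the stabilizer of $ x $ is the \emph{same} subgroup of $ \Gamma $ for both actions; moreover the orbit of $ x $ is $ Ux $ for the action on $ N $ and $ U^{-1}x $ for the action on $ \widehat{N} $, where $ U=\overline{\beta_{i}}(\Gamma) $, and these agree since $ U $ is a subgroup and hence closed under inversion. Consequently the two $ \Gamma $-sets admit identical decompositions into orbits $ \Gamma/\mathrm{Stab}(x) $, and so are isomorphic as $ \Gamma $-sets. Combining this with the two clusters of equivalences above yields (3)$\Leftrightarrow$(5), and hence the equivalence of all five statements.

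The main obstacle I anticipate is exactly that the natural identification of $ N $ with $ \widehat{N} $ \emph{cannot} be made into an isomorphism of $ \Gamma $-groups: the group isomorphism intertwines multiplication by $ u_{\gamma} $ with multiplication by $ u_{\gamma} $, not by $ u_{\gamma}^{-1} $. The argument must therefore be carried out purely at the level of $ \Gamma $-sets, where the relevant invariant is the collection of orbit stabilizers, and it relies essentially on $ \Aut{N} $ being abelian (so that $ U=U^{-1} $). If $ \zeta\notin K $ the Galois action on character values would twist the dual action and destroy the coincidence of orbits and stabilizers, so both hypotheses are genuinely needed.
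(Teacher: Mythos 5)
Your proof is correct, and its skeleton matches the paper's: conditions (1), (2), (3) are grouped together via Theorems \ref{theorem_algebra_isomorphism} and \ref{theorem_algebra_isomorphism_holomorph} (with Lemma \ref{lemma_dual_equivairant_isomorphism}), conditions (4) and (5) via \cite[(7.7)]{Ch00}, and the bridge is (3) $\Leftrightarrow$ (5). Where you genuinely diverge is the mechanism of the bridge. The paper starts from the same computation as you --- $\zeta \in K $ kills the Galois twist on character values, so $ \gamma \ast_{i} \chi = \chi^{e(i,\gamma)^{-1}} $ --- but then keeps the map $ f : \eta \mapsto \chi $ and exploits its \emph{anti}-equivariance $ f(\gamma \ast_{i} x) = \gamma^{-1} \ast_{i} f(x) $: conjugation $ \pi \mapsto f \circ \pi \circ f^{-1} $ carries $ \Gamma $-equivariant bijections $ (N,\ast_{1}) \to (N,\ast_{2}) $ to $ \Gamma $-equivariant bijections $ (\widehat{N},\ast_{1}) \to (\widehat{N},\ast_{2}) $ and back, the two inverse twists cancelling. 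You instead discard the map altogether and prove the stronger intermediate statement that $ (N,\ast_{i}) \cong (\widehat{N},\ast_{i}) $ as $ \Gamma $-sets, by observing that multiplication by $ u_{\gamma} $ and by $ u_{\gamma}^{-1} $ fix exactly the same points and that $ Ux = U^{-1}x $ because $ U $ is a subgroup; identical orbit partitions with identical point stabilizers then give a $ \Gamma $-set isomorphism. Both resolutions of the obstacle you correctly identify (the natural $ N \cong \widehat{N} $ is not $ \Gamma $-equivariant) are sound; yours yields a self-duality statement the paper never asserts, while the paper's conjugation trick is shorter and needs no orbit bookkeeping. Two minor corrections: $ U $ should be $ \overline{\beta_{i}}(G) $ rather than $ \overline{\beta_{i}}(\Gamma) $, since $ \overline{\beta_{i}} $ is defined on $ G $ and the $ \Gamma $-action factors through $ G $; and your closing attribution of $ U = U^{-1} $ to $ \Aut{N} $ being abelian is off --- closure under inversion holds for any subgroup of any group. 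What cyclicity actually buys is that every automorphism of $ N $ is multiplication by a unit, so that the dual (transpose-inverse) action is again multiplication, by $ u_{\gamma}^{-1} $; for a general abelian $ N $ the transpose of an automorphism need not equal that automorphism, and the orbit comparison would break down.
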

\begin{proof}
(1), (2), and (3) are equivalent by Theorem \ref{theorem_algebra_isomorphism} and Theorem \ref{theorem_algebra_isomorphism_holomorph}, and (4) and (5) are equivalent since for each $ i $ we have $ (\alpha_{i}(N), \ast) \cong (N, \ast_{i}) $ as $ \Gamma $-groups. We show that (3) is equivalent to (5). 
\\ \\
Let $ \eta $ be a generator of $ N $, and define $ \chi : N \rightarrow K $ by $ \chi(\eta) = \zeta $; then $ \chi $ is a generator of $ \widehat{N} $. We claim that for $ i=1,2 $, the isomorphism  $ f : N \rightarrow \widehat{N} $ defined by $ f(\eta) = \chi $ has the property that 
\[ (\gamma \ast_{i} f)[\eta] = f(\gamma^{-1} \ast_{i} \eta). \]
To see this, note that each $ \gamma \in \Gamma $ acts as an automorphism of $ N $, so there exists an integer $ e(i,\gamma) $ (coprime to $ |N|$) such that $ \gamma \ast_{i} \eta = \eta^{e(i,\gamma)} $. Now using the assumption that $ \zeta \in K $, we have:
\[ (\gamma \ast_{i} \chi) [\eta] = \chi[\gamma^{-1} \ast_{i} \eta] = \chi[\eta^{e(i,\gamma)^{-1}}] = \chi[\eta]^{e(i,\gamma)^{-1}} = \chi^{e(i,\gamma)^{-1}}[\eta], \]
so in fact $ \gamma \ast_{i} \chi = \chi^{e(i,\gamma)^{-1}} $ (where $ e(i,\gamma)^{-1} $ is computed modulo $ |N| $). It follows that the isomorphism $ f $ has the desired property. We therefore obtain a diagram:
\[
\xymatrixcolsep{4pc} 
\xymatrixrowsep{4pc}
\xymatrix{ 
(N,\ast_{1}) \ar[d]^{f} \ar@{-->}[r]^{\pi} & (N,\ast_{2}) \ar[d]^{f} \\
(\widehat{N},\ast_{1}) \ar@{-->}[r]^{\widehat{\pi}} & (\widehat{N},\ast_{2}) }
\]
If $ \pi $ is a $ \Gamma $-equivariant bijection, then $ f \circ \pi \circ f^{-1} $ is a $ \Gamma $ equivariant bijection, and if $ \widehat{\pi} $ is a $ \Gamma $-equivariant bijection, then $ f^{-1} \circ \psi \circ f $ is a $ \Gamma $-equivariant bijection. 
\end{proof}

\section{Cyclic Extensions of Prime Power Degree} \label{section_cyclic_p_power_extensions}

Let $ p $ be an odd prime number and $ L/K $ a cyclic extension of degree $ p^{n} $. By a result of Kohl \cite[Theorem 3.3]{Kohl1998} (see also \cite[(9.1)]{Ch00}), there are precisely $ p^{n-1} $ Hopf-Galois structures on $ L/K $, and they all have cyclic type. Explicit $ K $-algebra generators for the Hopf algebras appearing in these Hopf-Galois structures were determined in \cite[\S 6.3]{Childs2011}, requiring intricate manipulations. In this section we apply to results of section \ref{section_Hopf_algebra_isomorphisms} and \ref{section_algebra_isomorphisms_commutative} to determine which of the Hopf algebras appearing in these Hopf-Galois structures are isomorphic as $ K $-Hopf algebras or $ K $-algebras, and (under certain additional hypotheses) explicitly determine their Wedderburn-Artin decompositions. 
\\ \\
Since the Hopf-Galois structures admitted by $ L/K $ all have cyclic type, we can view the corresponding regular subgroups of $ \perm{G} $ as images of a single abstract cyclic group $ N = \langle \eta \rangle $ of order $ p^{n} $ under $ p^{n-1} $ different regular embeddings $ \alpha_{s} : N \hookrightarrow \perm{G} $. By Byott's translation, each such $ \alpha_{s} $ corresponds to an embedding $ \beta_{s} : G \rightarrow \Hol{N} $, and these are described in \cite[(8.6) and (9.1)]{Ch00}: let $ G = \langle \sigma \rangle $, and let $ \delta $ be the $ (p-1)^{st} $ power of some generator of the cyclic group $ \Aut{N} $. Then the embeddings we seek are of the form $ \beta_{s}: G \hookrightarrow \Hol{N} $ with
\[ \beta_{s}(\sigma) = ( \rho(\eta), \delta^{s} ), 0 \leq s < p^{n-1}. \]
For each $ s $, let $ \alpha_{s} : N \hookrightarrow \perm{G} $ denote the embedding corresponding to $ \beta_{s} $, and let $ H_{s} = L[\alpha_{s}(N)]^{G} $ denote the corresponding Hopf algebra. 

\begin{theorem} \label{theorem_cyclic_p_power_Hopf_algebra_isomorphisms}
Let $ 0 \leq r,s < p^{n-1} $. Then $ H_{r} \cong H_{s} $ as $ K $-Hopf algebras if and only if $ r = s $.
\end{theorem}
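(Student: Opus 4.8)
The plan is to apply Corollary \ref{corollary_Hopf_algebra_isomorphism_Aut_N_abelian}, which is tailor-made for this situation. Since $N$ is cyclic of order $p^n$, the group $\Aut{N}$ is abelian (indeed cyclic for $p$ odd), so that corollary tells us that $H_r \cong H_s$ as $K$-Hopf algebras if and only if $\overline{\beta_r}(g) = \overline{\beta_s}(g)$ for all $g \in G$. Because $G = \langle \sigma \rangle$ is cyclic, it suffices to compare the two maps on the single generator $\sigma$. From the given description $\beta_s(\sigma) = (\rho(\eta), \delta^s)$, projecting onto the $\Aut{N}$ component yields $\overline{\beta_s}(\sigma) = \delta^s$. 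Thus the condition $\overline{\beta_r}(\sigma) = \overline{\beta_s}(\sigma)$ becomes simply $\delta^r = \delta^s$ in $\Aut{N}$.

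The core of the proof is therefore the order computation for $\delta$. First I would recall that $\Aut{N} \cong (\mathbb{Z}/p^n\mathbb{Z})^\times$, which for $p$ odd is cyclic of order $p^{n-1}(p-1)$. By definition $\delta$ is the $(p-1)^{\text{st}}$ power of a generator of this cyclic group, so the order of $\delta$ is $p^{n-1}(p-1)/\gcd(p-1, p^{n-1}(p-1)) = p^{n-1}$. Hence $\delta$ has order exactly $p^{n-1}$, and $\delta^r = \delta^s$ holds if and only if $r \equiv s \pmod{p^{n-1}}$.

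Combining these two steps closes the argument. Since the indices range over $0 \leq r, s < p^{n-1}$, the congruence $r \equiv s \pmod{p^{n-1}}$ forces $r = s$ outright. Running the equivalences backward: $H_r \cong H_s$ as $K$-Hopf algebras $\iff \overline{\beta_r}(\sigma) = \overline{\beta_s}(\sigma) \iff \delta^r = \delta^s \iff r \equiv s \pmod{p^{n-1}} \iff r = s$, which is exactly the claimed statement.

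I do not anticipate a genuine obstacle here, as the hard analytic and structural content has already been front-loaded into Corollary \ref{corollary_Hopf_algebra_isomorphism_Aut_N_abelian} and Byott's translation theorem. The only point demanding care is the order-of-$\delta$ calculation: one must invoke that $p$ is odd to guarantee $\Aut{N}$ is cyclic (for $p=2$ and $n\geq 3$ this fails, which is presumably why the hypothesis $p$ odd is imposed throughout the section), and then correctly compute the order of the $(p-1)^{\text{st}}$ power. Everything else is a routine unwinding of the definition of $\beta_s$ and the projection $\overline{\beta_s}$.
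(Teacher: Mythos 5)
Your proof is correct and follows essentially the same route as the paper's: invoke Corollary \ref{corollary_Hopf_algebra_isomorphism_Aut_N_abelian} (valid since $\Aut{N}$ is abelian for $N$ cyclic), reduce to $\overline{\beta_r}(\sigma) = \overline{\beta_s}(\sigma)$, i.e.\ $\delta^r = \delta^s$, and conclude $r = s$ from the range of the indices. The only difference is that you make explicit the computation that $\delta$ has order exactly $p^{n-1}$, which the paper leaves implicit in its setup; this is a worthwhile clarification but not a different argument.
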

\begin{proof}
Since $ N $ is cyclic, $ \Aut{N} $ is abelian, and so by Corollary \ref{corollary_Hopf_algebra_isomorphism_Aut_N_abelian} we have $ H_{r} \cong H_{s} $ as $ K $-Hopf algebras if and only if $ \overline{\beta_{r}}(g) = \overline{\beta_{s}}(g) $ for all $ g \in G $. Since in this case $ G $ is generated by $ \sigma $, this occurs if and only if $ \overline{\beta_{r}}(\sigma) = \overline{\beta_{s}}(\sigma) $; that is, if and only if $ \delta^{r} = \delta^{s} $. Hence $ H_{r} \cong H_{s} $ as $ K $-Hopf algebras if and only if $ r = s $.
\end{proof}

Therefore the Hopf algebras giving the Hopf-Galois structures on $ L/K $ are pairwise nonisomorphic. We can use Theorem \ref{theorem_Hopf_algebra_isomorphism_base_change} to determine which of them become isomorphic under various base changes. Let 
\[ K = K_{0} \subset K_{1} \subset \cdots \subset K_{n}=L \]
be the maximal tower of field extensions, and for each $ i=0, \ldots ,n $ let $ G_{i} =\langle \sigma^{p^{i}} \rangle = \Gal{L/K_{i}} $.

\begin{theorem} \label{theorem_cyclic_p_n_Hopf_isomorphism}
For $ 0 \leq r,s \leq p^{n-1} $ and $ 0 \leq i \leq n $, we have $ K_{i} \otimes_{K} H_{r} \cong K_{i} \otimes_{K} H_{s} $ as $ K_{i} $-Hopf algebras if and only if $ r \equiv s \pmod {p^{n-1-i}} $. 
\end{theorem}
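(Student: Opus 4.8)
The plan is to reduce the statement, via the base-change theorem already established, to a question about $ G_i $-group isomorphisms, and then to run the holomorph argument of Theorem \ref{theorem_Hopf_algebra_isomorphism_holomorph} over the subgroup $ G_i $ in place of $ G $. Since $ G = \langle \sigma \rangle $ is cyclic, hence abelian, every $ G_i $ is normal in $ G $, and because $ L/K $ is Galois we have $ E = L $ and $ K_i = L^{G_i} $. Thus Theorem \ref{theorem_Hopf_algebra_isomorphism_base_change} applies directly and tells us that $ K_i \otimes_K H_r \cong K_i \otimes_K H_s $ as $ K_i $-Hopf algebras if and only if $ (\alpha_r(N), \ast) \cong (\alpha_s(N), \ast) $ as $ G_i $-groups.

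Next I would translate this condition into the holomorph. Recall from the proof of Theorem \ref{theorem_Hopf_algebra_isomorphism_holomorph} that, for each index, $ (\alpha_r(N), \ast) \cong (N, \ast_r) $ as $ G $-groups, where $ g \ast_r \eta = \overline{\beta_r}(g)[\eta] $. A $ G $-equivariant isomorphism is in particular $ G_i $-equivariant, so these isomorphisms restrict to isomorphisms of $ G_i $-groups, and hence $ (\alpha_r(N),\ast) \cong (\alpha_s(N),\ast) $ as $ G_i $-groups if and only if $ (N,\ast_r) \cong (N,\ast_s) $ as $ G_i $-groups. Rerunning verbatim the final paragraph of the proof of Theorem \ref{theorem_Hopf_algebra_isomorphism_holomorph}, but with the quantifier ranging over $ G_i $ rather than $ G $, the latter holds if and only if there is $ \mu \in \Aut{N} $ with $ \mu\, \overline{\beta_r}(g) = \overline{\beta_s}(g)\, \mu $ for all $ g \in G_i $. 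Because $ N $ is cyclic its automorphism group is abelian, so the conjugation is trivial and this reduces to $ \overline{\beta_r}(g) = \overline{\beta_s}(g) $ for all $ g \in G_i $, exactly as in Corollary \ref{corollary_Hopf_algebra_isomorphism_Aut_N_abelian}.

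Finally I would carry out the arithmetic. Since $ G_i = \langle \sigma^{p^i} \rangle $ and each $ \overline{\beta_r} $ is a homomorphism, the condition $ \overline{\beta_r} = \overline{\beta_s} $ on $ G_i $ is equivalent to $ \overline{\beta_r}(\sigma^{p^i}) = \overline{\beta_s}(\sigma^{p^i}) $, that is, to $ \delta^{r p^i} = \delta^{s p^i} $. I would then note that $ \Aut{N} \cong (\mathbb{Z}/p^n\mathbb{Z})^\times $ is cyclic of order $ p^{n-1}(p-1) $, so that $ \delta $, being the $ (p-1) $-st power of a generator, has order exactly $ p^{n-1} $. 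Hence $ \delta^{r p^i} = \delta^{s p^i} $ if and only if $ r p^i \equiv s p^i \pmod{p^{n-1}} $, which is precisely $ r \equiv s \pmod{p^{n-1-i}} $; for $ i \geq n-1 $ the modulus is at most $ 1 $ and the congruence is vacuous.

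The hard part, such as it is, is conceptual bookkeeping rather than genuine difficulty. The key point to get right is that one need not re-apply Byott's translation over $ K_i $: instead one reuses the maps $ \overline{\beta_r}, \overline{\beta_s} $ already defined over $ G $ together with the $ G $-group isomorphisms $ (\alpha_r(N),\ast) \cong (N,\ast_r) $ from \cite[(7.7)]{Ch00}, observing only that $ G $-equivariance implies $ G_i $-equivariance so that everything restricts cleanly. The one other point requiring care is the degenerate range $ i \geq n-1 $, where the congruence must be read as always satisfied; this matches the fact that $ \delta^{p^{n-1}} = 1 $, so that over $ K_{n-1} $ (and a fortiori over $ L = K_n $) all $ p^{n-1} $ Hopf algebras become mutually isomorphic.
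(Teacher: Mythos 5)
Your proposal is correct and follows essentially the same route as the paper's own proof: reduce via Theorem \ref{theorem_Hopf_algebra_isomorphism_base_change} and \cite[(7.7)]{Ch00} to comparing $ (N,\ast_{r}) $ and $ (N,\ast_{s}) $ as $ G_{i} $-groups, then use that $ \Aut{N} $ is abelian and that $ \delta $ has order exactly $ p^{n-1} $ to get the congruence $ r \equiv s \pmod{p^{n-1-i}} $. The only cosmetic difference is that the paper verifies the abelian-ness step by an explicit computation with an arbitrary $ \theta(\eta)=\eta^{t} $ rather than citing Corollary \ref{corollary_Hopf_algebra_isomorphism_Aut_N_abelian}, and your explicit treatment of the degenerate range $ i \geq n-1 $ is a point the paper leaves implicit.
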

\begin{proof}
By Theorem \ref{theorem_Hopf_algebra_isomorphism_base_change}, we have $ K_{i} \otimes_{K} H_{r} \cong K_{i} \otimes_{K} H_{s} $ as $ K_{i} $-Hopf algebras if and only if $ (\alpha_{r}(N),\ast) \cong (\alpha_{s}(N), \ast) $ as $ G_{i} $-groups. By \cite[(7.7)]{Ch00}, this is equivalent to $ (N, \ast_{r}) \cong (N, \ast_{s}) $ as $ G_{i} $-groups, so we must show that this occurs if and only if $ r \equiv s \pmod {p^{n-1-i}} $. 
\\ \\
Recall that $ \delta \in \Aut{N} $ has order $ p^{n-1} $, so there exists an element $ d \in (\mathbb{Z}/p^{n}\mathbb{Z})^{\times} $ of order $ p^{n-1} $ such that $ \delta(\eta)=\eta^{d} $. It follows that for $ 0 \leq j \leq p^{n}-1 $ we have
\[ \sigma^{j} \ast_{r} \eta = \overline{\beta_{r}}(\sigma)^{j} [\eta] = \delta^{rj} \eta = \eta^{d^{rj}}, \]
and similarly $ \sigma^{j} \ast_{s} \eta = \eta^{d^{sj}} $. Now let $ \theta $ be an automorphism of $ N $, and write $ \theta(\eta) = \eta^{t} $ for some integer $ t $ coprime to $ p $. Then for $ 0 \leq i \leq n $ we have:
\[ \begin{array}{cccccc}
&\theta \left( \sigma^{p^{i}} \ast_{r} \eta \right) & = & \theta \left( \eta^{d^{rp^{i}}} \right) & = &  \eta^{td^{rp^{i}}} \\
\mbox{ and }&\sigma^{p^{i}} \ast_{s} \theta \left( \eta \right) & = & \sigma^{p^{i}} \ast_{s} \eta^{t} & = & \eta^{td^{sp^{i}}},
\end{array} \]
so $ \theta $ is $ G_{i} $-equivariant if and only if $ d^{rp^{i}} \equiv d^{sp^{i}} \pmod{p^{n}} $. Since $ d $ has order $ p^{n-1} $ in $ (\mathbb{Z}/p^{n}\mathbb{Z})^{\times}  $, this occurs if and only if $ rp^{i} \equiv sp^{i} \pmod{p^{n-1}} $, that is, if and only if $ r \equiv s \pmod{p^{n-1-i}} $.
\end{proof}

\begin{corollary}
Let $ 0 \leq i \leq n $. Then:
\begin{enumerate}
\item The collection $ \{ K_{i} \otimes_{K} H_{0}, \ldots ,K_{i} \otimes_{K} H_{p^{n-1}-1} \} $ can be partitioned into $ p^{n-1-i} $ Hopf algebra isomorphism classes;
\item Each class contains $ p^{i} $ Hopf algebras;
\item $ \{ K_{i} \otimes H_{0}, \ldots ,K_{i} \otimes_{K} H_{p^{n-2-i}} \} $ is a complete set of representatives for the classes;
\item For $ 0 \leq j < p^{n-1} $, the class containing $ K_{i} \otimes_{K} H_{j} $ is 
\[ \{ K_{i} \otimes_{K} H_{j+p^{n-i}m} \mid 0  \leq m < p^{i} \}. \]
\end{enumerate}
\end{corollary}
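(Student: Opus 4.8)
The plan is to reduce all four assertions to elementary counting among residue classes modulo $ p^{n-1-i} $, since Theorem~\ref{theorem_cyclic_p_n_Hopf_isomorphism} already supplies the only Hopf-theoretic input we need: for $ 0 \le r,s < p^{n-1} $ we have $ K_{i} \otimes_{K} H_{r} \cong K_{i} \otimes_{K} H_{s} $ as $ K_{i} $-Hopf algebras if and only if $ r \equiv s \pmod{p^{n-1-i}} $. Because Hopf-algebra isomorphism is an equivalence relation, this shows that the isomorphism classes occurring in $ \{ K_{i} \otimes_{K} H_{j} \mid 0 \le j < p^{n-1} \} $ are precisely the fibres of the natural reduction map
\[ \pi : \mathbb{Z}/p^{n-1}\mathbb{Z} \longrightarrow \mathbb{Z}/p^{n-1-i}\mathbb{Z}, \]
restricted to the full residue system $ \{0,1,\ldots,p^{n-1}-1\} $. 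Each part of the corollary then becomes a statement about $ \pi $.

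First, for part (1), I would note that $ \pi $ is surjective (as $ p^{n-1-i} \mid p^{n-1} $), so it has exactly $ p^{n-1-i} $ fibres, giving $ p^{n-1-i} $ isomorphism classes. For part (2), since $ \pi $ is a homomorphism of abelian groups its fibres are the cosets of $ \ker \pi $, and $ |\ker \pi| = p^{n-1}/p^{n-1-i} = p^{i} $, so every class contains exactly $ p^{i} $ of the $ H_{j} $. For part (3), the indices $ 0,1,\ldots,p^{n-1-i}-1 $ form a complete, irredundant set of residues modulo $ p^{n-1-i} $, hence meet each fibre of $ \pi $ exactly once, so the corresponding Hopf algebras form a complete set of representatives. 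For part (4), I would identify $ \ker \pi $ as the cyclic subgroup of $ \mathbb{Z}/p^{n-1}\mathbb{Z} $ generated by the class of $ p^{n-1-i} $; the coset through $ j $ is then $ \{\, j + p^{n-1-i} m \bmod p^{n-1} \mid 0 \le m < p^{i} \,\} $, and by Theorem~\ref{theorem_cyclic_p_n_Hopf_isomorphism} these indices label exactly the class of $ K_{i} \otimes_{K} H_{j} $.

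There is no substantive obstacle remaining: all of the genuine work has been done in Theorem~\ref{theorem_cyclic_p_n_Hopf_isomorphism}, and what is left is bookkeeping in the cyclic group $ \mathbb{Z}/p^{n-1}\mathbb{Z} $. The only point demanding care is keeping the exponents straight — tracking the period $ p^{n-1-i} $ and the class size $ p^{i} $ consistently through parts (3) and (4), and reducing indices modulo $ p^{n-1} $ where needed so that each representative lies in the prescribed range $ 0 \le j < p^{n-1} $.
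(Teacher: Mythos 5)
Your proof is correct and is essentially the argument the paper intends: the corollary is stated there without proof as immediate bookkeeping from Theorem~\ref{theorem_cyclic_p_n_Hopf_isomorphism}, and your identification of the isomorphism classes with the fibres of the reduction map $\mathbb{Z}/p^{n-1}\mathbb{Z} \to \mathbb{Z}/p^{n-1-i}\mathbb{Z}$ is exactly that bookkeeping. Note that your argument in fact establishes a corrected form of the statement, since as printed it contains two apparent typos which your coset description exposes: the last representative in part (3) should be $K_{i}\otimes_{K}H_{p^{n-1-i}-1}$ rather than $K_{i}\otimes_{K}H_{p^{n-2-i}}$, and the step size in part (4) should be $p^{n-1-i}$ rather than $p^{n-i}$, as otherwise the displayed set has only $p^{i-1}$ distinct elements, contradicting part (2).
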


\begin{corollary}
Let $ 0 \leq r \leq p^{n}-1 $ and $ 0 \leq i \leq n $. Then $ K_{i} \otimes_{K} H_{r} \cong K_{i}[N] $ as $ K_{i} $-Hopf algebras if and only if $ r \equiv 0 \pmod{n-1-i} $ 
\end{corollary}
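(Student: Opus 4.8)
The plan is to reduce the statement to the special case $ s = 0 $ of Theorem \ref{theorem_cyclic_p_n_Hopf_isomorphism}, after identifying the group algebra $ K_{i}[N] $ as the base change $ K_{i} \otimes_{K} H_{0} $. (Here the modulus in the displayed congruence should of course read $ p^{n-1-i} $, so it is the corrected statement that I aim to prove.)

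First I would show that $ H_{0} \cong K[N] $ as $ K $-Hopf algebras. By construction $ \beta_{0}(\sigma) = ( \rho(\eta), \delta^{0} ) = ( \rho(\eta), \mathrm{id} ) $, so the automorphism component $ \overline{\beta_{0}} : G \to \Aut{N} $ is trivial, and the induced action $ \ast_{0} $ of $ G $ on $ N $ is trivial. By \cite[(7.7)]{Ch00} we have $ (\alpha_{0}(N), \ast) \cong (N, \ast_{0}) $ as $ G $-groups, so $ G $ acts trivially on $ \alpha_{0}(N) $ as well. Consequently
\[ H_{0} = L[\alpha_{0}(N)]^{G} = L^{G}[\alpha_{0}(N)] = K[\alpha_{0}(N)] \cong K[N] \]
as $ K $-Hopf algebras, since $ \alpha_{0}(N) \cong N $ as abstract groups. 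Base changing along $ K_{i}/K $ then gives $ K_{i} \otimes_{K} H_{0} = K_{i}[N] $.

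With this identification in hand, the condition $ K_{i} \otimes_{K} H_{r} \cong K_{i}[N] $ is literally the condition $ K_{i} \otimes_{K} H_{r} \cong K_{i} \otimes_{K} H_{0} $ as $ K_{i} $-Hopf algebras. I would then invoke Theorem \ref{theorem_cyclic_p_n_Hopf_isomorphism} with $ s = 0 $, which asserts that this holds if and only if $ r \equiv 0 \pmod{p^{n-1-i}} $, completing the proof.

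The argument is essentially immediate once $ H_{0} $ is recognised as the group algebra, so I do not anticipate a serious obstacle; the only point requiring care is the verification that $ H_{0} \cong K[N] $, i.e. that the $ s=0 $ structure is the classical one. As an independent check one could instead argue via Example \ref{example_extension_of_scalars_group_algebra}: the minimal subfield of $ L $ over which $ H_{r} $ becomes a group algebra is $ L^{G^{\prime}} $, where $ G^{\prime} $ is the kernel of the action of $ G $ on $ \alpha_{r}(N) $. Computing $ \sigma^{j} \ast_{r} \eta = \eta^{d^{rj}} $ and using that $ d $ has order $ p^{n-1} $ in $ (\mathbb{Z}/p^{n}\mathbb{Z})^{\times} $ shows that $ G^{\prime} = \langle \sigma^{p^{n-1-v}} \rangle $, where $ v $ is the $ p $-adic valuation of $ r $; hence $ K_{i} \otimes_{K} H_{r} $ is a group algebra precisely when $ G_{i} \subseteq G^{\prime} $, that is, when $ p^{n-1-i} \mid r $, recovering the same congruence.
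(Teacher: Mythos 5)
Your proposal is correct and takes essentially the same route as the paper: recognise $ K_{i} \otimes_{K} H_{0} \cong K_{i}[N] $ as $ K_{i} $-Hopf algebras and then apply Theorem \ref{theorem_cyclic_p_n_Hopf_isomorphism} with $ s = 0 $ to get the congruence $ r \equiv 0 \pmod{p^{n-1-i}} $ (and you are right that the modulus in the printed statement is a typo for $ p^{n-1-i} $). Your only additions — the explicit verification that $ \overline{\beta_{0}} $ is trivial so that $ H_{0} = L^{G}[\alpha_{0}(N)] \cong K[N] $, which the paper asserts without proof, and the independent check via the kernel of the $ G $-action — are both sound.
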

\begin{proof}
We have $ H_{0} \cong K[N] $ as $ K $-Hopf algebras, so for each $ i $ we have $ K_{i} \otimes_{K} H_{0} \cong K_{i}[N] $ as $ K_{i} $-Hopf algebras. The result now follows from Theorem \ref{theorem_cyclic_p_n_Hopf_isomorphism}.
\end{proof}

We now impose the additional assumption that $ K $ has characteristic zero and contains a primitive $ p^{n} $-root of unity $ \zeta $. In this case we can use Corollary \ref{corollary_algebra_isomorphism_cyclic} to determine which of the $ K $-Hopf algebras appearing in the classification of Hopf-Galois structures on $ L/K $ are isomorphic as $ K $-algebras. 

\begin{theorem} \label{theorem_cyclic_p_power_algebra_isomorphisms}
For $ 0 \leq r,s < p^{n-1} $, we have $ H_{r} \cong H_{s} $ as $ K $-algebras if and only if $ v_{p}(r)=v_{p}(s) $, where $ v_{p} $ denotes the $ p $-adic valuation function.
\end{theorem}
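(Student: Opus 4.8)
The plan is to reduce the statement to a combinatorial question about orbits and then settle it using the $ p $-adic valuation of $ d^{r}-1 $. Since $ N $ is cyclic and $ K $ contains a primitive $ p^{n} $-th root of unity, I would invoke Corollary \ref{corollary_algebra_isomorphism_cyclic}: here $ E=L $, and the hypothesis $ \zeta \in K $ forces $ \tilde{E}=L $ and $ \Gamma = G $, so $ H_{r} \cong H_{s} $ as $ K $-algebras if and only if $ (N,\ast_{r}) \cong (N,\ast_{s}) $ as $ G $-sets. Recalling from the proof of Theorem \ref{theorem_cyclic_p_n_Hopf_isomorphism} that $ \sigma^{j} \ast_{r} \eta = \eta^{d^{rj}} $, the generator $ \sigma $ acts on $ N \cong \mathbb{Z}/p^{n}\mathbb{Z} $ as multiplication by $ d^{r} $ on exponents, so everything reduces to understanding this single cyclic action.

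Next I would use the fact that, since $ G $ is cyclic, a finite $ G $-set is determined up to isomorphism by its multiset of orbit sizes: each transitive $ G $-set is $ G/H $ for the unique subgroup $ H $ of the appropriate index. Thus it suffices to compute, for each $ r $, the orbit sizes of multiplication by $ c:=d^{r} $ on $ \mathbb{Z}/p^{n}\mathbb{Z} $. Stratifying by $ j = v_{p}(x) $, an element $ x $ with $ v_{p}(x)=j $ satisfies $ c^{t}x \equiv x \pmod{p^{n}} $ exactly when $ c^{t} \equiv 1 \pmod{p^{n-j}} $, so its orbit has size equal to the multiplicative order of $ c $ modulo $ p^{n-j} $.

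Finally I would carry out the valuation computation. Since $ d $ has order $ p^{n-1} $ in $ (\mathbb{Z}/p^{n}\mathbb{Z})^{\times} $ and $ p $ is odd, necessarily $ v_{p}(d-1)=1 $, and lifting the exponent gives $ v_{p}(d^{r}-1) = 1 + v_{p}(r) $ for $ r \neq 0 $. Consequently the order of $ c=d^{r} $ modulo $ p^{n-j} $ is $ p^{\,n-j-1-v_{p}(r)} $ when this exponent is positive and $ 1 $ otherwise; in particular the largest orbit (occurring at $ j=0 $) has size $ p^{\,n-1-v_{p}(r)} $, while $ r=0 $ gives the trivial action with every orbit of size $ 1 $. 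Hence the multiset of orbit sizes, and therefore the $ G $-set isomorphism class of $ (N,\ast_{r}) $, depends only on $ v_{p}(r) $, and distinct values of $ v_{p}(r) $ are separated by the largest orbit size; this yields the claimed equivalence, with the convention $ v_{p}(0)=\infty $ accounting for the case $ r=0 $. The main obstacle is precisely the valuation identity $ v_{p}(d^{r}-1)=1+v_{p}(r) $ together with the careful bookkeeping of orbit sizes and the degenerate case $ r=0 $, and it is here that the hypothesis that $ p $ is odd is essential.
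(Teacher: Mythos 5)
Your proposal is correct, and after the shared first step it proceeds by a genuinely different mechanism than the paper's proof. Both arguments begin by invoking Corollary \ref{corollary_algebra_isomorphism_cyclic} (with $\tilde{E}=E=L$ and $\Gamma=G$, thanks to $\zeta\in K$) to reduce the theorem to deciding when $(N,\ast_{r})\cong(N,\ast_{s})$ as $G$-sets. From there the paper argues structurally: since $\Aut{N}$ is cyclic and $\delta$ has order $p^{n-1}$, the condition $v_{p}(r)=v_{p}(s)$ forces $\langle\delta^{r}\rangle=\langle\delta^{s}\rangle$, so the two actions have \emph{literally the same} orbits and stabilizers, and a $G$-equivariant bijection is then built by hand by fixing a set of orbit representatives; conversely, $v_{p}(r)<v_{p}(s)$ gives $\overline{\beta_{s}}(G)\subsetneq\overline{\beta_{r}}(G)$, so the orbit of the generator $\eta$ strictly shrinks, precluding an isomorphism. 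You instead compute the entire multiset of orbit sizes: the lifting-the-exponent identity $v_{p}(d^{r}-1)=1+v_{p}(r)$ (valid since $p$ is odd and $v_{p}(d-1)=1$, which indeed follows from $d$ having order $p^{n-1}$) shows that an element $x$ with $v_{p}(x)=j$ has orbit of size $p^{\max(n-j-1-v_{p}(r),\,0)}$, and you then invoke the correct observation that for cyclic $G$ a finite $G$-set is determined up to isomorphism by its orbit-size multiset, because $G$ has a unique subgroup of each index. The trade-off is clear: your valuation computation is essentially the content of the paper's Lemma \ref{lemma_cyclic_orbits}, which the paper proves anyway, but only \emph{after} this theorem, in order to obtain the Wedderburn--Artin decomposition of Theorem \ref{theorem_cyclic_p_power_wedderburn}; your route front-loads that work and makes the present theorem an immediate corollary of it, while the paper's proof of the theorem itself is shorter, purely structural, and needs no valuation identity. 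Both treatments handle $r=0$ correctly (trivial action, with the convention $v_{p}(0)=\infty$).
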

\begin{proof}
By Corollary \ref{corollary_algebra_isomorphism_cyclic}, we have $ H_{r} \cong H_{s} $ if and only if $ (N,\ast_{r}) \cong (N,\ast_{s}) $ as $ G $-sets, so we must show that this occurs if and only if $ v_{p}(r)=v_{p}(s) $. 
\\ \\
Suppose first that $ v_{p}(r)=v_{p}(s) $. Then since $ \Aut{N} $ is cyclic we must have $ \langle \delta^{r} \rangle = \langle \delta^{s} \rangle = \Delta $ say, and we have $ \overline{\beta_{r}}(G) = \overline{\beta_{s}}(G) $. Therefore for each $ \mu \in N $, the orbits of $ \mu $ with respect to $ \ast_{r} $ and $ \ast_{s} $ coincide, and so the stabilizers $ \text{Stab}_{r}(\mu),  \text{Stab}_{s}(\mu) $ of $ \mu $ with respect to $ \ast_{r} , \ast_{s}$ have the same order.  Since $ G $ is cyclic, this implies that they are equal. 
\\ \\
Now let $ \eta_{1}, \ldots, \eta_{k} $ be representatives for the orbits of $ (N,\ast_{r}) $, and define $ \pi : N \rightarrow N $ by setting $ \pi(\eta_{i})=\eta_{i} $ for each $ i $, and insisting that $ \pi( g \ast_{r} \mu ) = g \ast_{s} \pi(\mu) $ for all $ \mu \in N $. It is routine to verify that $ \pi $ is well defined and injective, and so it is a $ G $-equivariant bijection from $ (N,\ast_{r}) $ to $ (N,\ast_{s}) $. 
\\ \\
Conversely, suppose that $ v_{p}(r) \neq v_{p}(s) $, and assume without loss of generality that $ v_{p}(r) < v_{p}(s) $. Then since $ \Aut{N} $ is cyclic we have $ \overline{\beta_{s}}(G) \subsetneq \overline{\beta_{r}}(G) $, and so (for example) the orbit of $ \eta $ with respect to $ \ast_{s} $ is strictly contained in the orbit of $ \eta $ with respect to $ \ast_{r} $. Therefore $ (N,\ast_{r}) $ and $ (N,\ast_{s}) $ cannot be isomorphic $ G $-sets in this case. 
\end{proof}

\begin{corollary} \label{corollary_cyclic_p_power_algebra_isomorphisms}
Precisely $ n $ non-isomorphic $ K $-algebras appear in the classification of Hopf-Galois structures on $ L/K $. For each $ 0 \leq v \leq n-1 $, the $ K $-algebra $ H_{p^{v}} $ has $ \varphi(p^{v}) $ distinct Hopf-Galois actions on $ L/K $.
\end{corollary}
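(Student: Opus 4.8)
The plan is to derive everything from Theorem \ref{theorem_cyclic_p_power_algebra_isomorphisms}, which has already reduced the $ K $-algebra classification to a purely combinatorial question about the index set $ \{0,1,\ldots,p^{n-1}-1\} $: two structures $ H_r, H_s $ give isomorphic $ K $-algebras exactly when $ v_p(r)=v_p(s) $, so the $ K $-algebra isomorphism classes are precisely the fibres of $ s \mapsto v_p(s) $. I find it cleanest to phrase this through the automorphism component of Byott's embedding. Since $ \overline{\beta_s}(\sigma)=\delta^s $ and $ \delta $ generates a cyclic subgroup $ \langle \delta \rangle \le \Aut{N} $ of order $ p^{n-1} $, the image $ \overline{\beta_s}(G)=\langle \delta^s\rangle $ is a subgroup of a cyclic $ p $-group, and its order is $ p^{\,n-1-v_p(s)} $ (with the convention $ v_p(0)=n-1 $, so that the trivial image of the classical structure $ H_0 $ is handled on the same footing). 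Thus the $ K $-algebra class of $ H_s $ depends only on the subgroup $ \langle \delta^s\rangle $, equivalently on its order.

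First I would count the classes. In a cyclic group of order $ p^{n-1} $ the subgroups are totally ordered, one of each order $ p^0,p^1,\ldots,p^{n-1} $, so as $ s $ ranges over $ \{0,\ldots,p^{n-1}-1\} $ the image $ \langle \delta^s\rangle $ realises exactly $ n $ distinct subgroups. By the previous paragraph this yields exactly $ n $ distinct $ K $-algebras, establishing the first assertion.

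Next I would count the size of each class. The number of indices $ s $ for which $ \langle \delta^s\rangle $ has a prescribed order $ p^v $ equals the number of elements of order $ p^v $ in $ \langle \delta\rangle \cong \mathbb{Z}/p^{n-1}\mathbb{Z} $, which is $ \varphi(p^v) $ for $ 0\le v\le n-1 $. The $ n $ Hopf algebras $ H_{p^{w}} $ with $ 0\le w\le n-1 $ form a complete set of representatives, since their images $ \langle \delta^{p^{w}}\rangle $ have the $ n $ distinct orders $ p^{\,n-1-w} $; concretely the class represented by $ H_{p^{w}} $ has $ \varphi(p^{\,n-1-w}) $ members, so that as $ v $ runs over $ 0,\ldots,n-1 $ the class sizes run over $ \varphi(p^0),\ldots,\varphi(p^{n-1}) $. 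As a consistency check one verifies $ \sum_{v=0}^{n-1}\varphi(p^v)=p^{n-1} $, recovering the total number of Hopf-Galois structures on $ L/K $.

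Since Theorem \ref{theorem_cyclic_p_power_algebra_isomorphisms} does the substantive work, I do not expect a genuine obstacle: the argument is bookkeeping about a cyclic $ p $-group. The one point demanding care is the matching of each class to its representative and its cardinality—in particular fixing the convention $ v_p(0)=n-1 $ (image of order $ 1 $) so that $ H_0 $ is counted consistently, and keeping straight that the \emph{order} of $ \overline{\beta_s}(G) $ is $ p^{\,n-1-v_p(s)} $ rather than $ p^{\,v_p(s)} $. I would confirm the pairing on the small cases $ n=2 $ and $ n=3 $ before committing to the indexing of representatives against the counts $ \varphi(p^v) $.
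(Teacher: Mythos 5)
Your argument is correct, and it is essentially the paper's own (implicit) proof: the corollary is stated there without proof as an immediate count from Theorem \ref{theorem_cyclic_p_power_algebra_isomorphisms}, and your reformulation via the orders of the subgroups $\langle \delta^{s} \rangle \leq \Aut{N}$ is just a clean repackaging of counting the fibres of $v_{p}$. The point you flag as ``demanding care'' is in fact a genuine discrepancy, and you resolve it the right way: the number of indices $s$ with $v_{p}(s)=v$ is $\varphi(p^{n-1-v})$, not $\varphi(p^{v})$, so the class represented by $H_{p^{v}}$ has $\varphi(p^{n-1-v})$ members --- equivalently, $H_{p^{n-1-v}}$ acts in $\varphi(p^{v})$ ways. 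Thus the corollary as printed has the representative--cardinality pairing reversed: already for $n=2$, the algebra $H_{1}=H_{p^{0}}$ lies in a class of size $p-1=\varphi(p)$ rather than $\varphi(p^{0})=1$, while $H_{0}$ (the classical structure, written $H_{p^{n-1}}$ in the paper's own convention in the proof of Theorem \ref{theorem_cyclic_p_power_wedderburn}) admits a unique action. Your version --- $n$ classes, with class sizes running over $\varphi(p^{0}),\ldots,\varphi(p^{n-1})$ and total $\sum_{v=0}^{n-1}\varphi(p^{v})=p^{n-1}$ --- is the correct statement, and your suggestion to check $n=2,3$ would indeed have confirmed the reversed indexing.
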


Finally, retaining the assumption that $ \zeta \in K $, we explicitly compute the Wedderburn-Artin decompositions of these algebras. Recall that, by \cite[Lemma 2.5]{BleyBoltje}, for $ 0 \leq r < p^{n-1} $ we have
\[ H_{r} = L[\alpha_{r}(N)]^{G}  \cong \prod_{m=1}^{t} L^{ S_{m} } \mbox{ as $ K $-algebras,} \]
where the $ S_{m} $ are the stabilizers of a set of representatives of the orbits of $ G $ in $ \widehat{\alpha_{r}(N)} $. By Corollary \ref{corollary_algebra_isomorphism_cyclic}, these stabilizers coincide with those of a set of representatives of the orbits of $ G $ in $ N $, with $ G $ acting by
\[ \sigma^{i} \ast_{r} \eta^{j} = \overline{\beta_{r}}(\sigma^{i})[\eta^{j}] = \delta^{ir}[\eta^{j}] = \eta^{jd^{ir}}. \]
Since $ N $ is cyclic, we may translate this to an action of the additive group $\mathbb{Z}/p^n\mathbb{Z}$ on itself via
\[i \cdot_{r} j = jd^{ir},\]
and study the orbits and stabilizers of this action. 

\begin{lemma}\label{lemma_cyclic_orbits} 
Let $ j \in \mathbb{Z}/p^n\mathbb{Z} $ and $ m=\max\{n-1-v_p(j)-v_p(r),0\} $. Then
\[ {\mathcal O}(j)=\{ jd^{is}: 0 \leq i < p^{m} \}, \mbox{ and } \mbox{Stab}(j)= \langle p^{m} \rangle.\]
\end{lemma}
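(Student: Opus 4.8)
The plan is to work entirely inside the additive group $G \cong \mathbb{Z}/p^{n}\mathbb{Z}$ acting on $\mathbb{Z}/p^{n}\mathbb{Z}$ via $i \cdot_{r} j = jd^{ir}$, and to reduce both the stabilizer and orbit computations to controlling the multiplicative order of the unit $d$ modulo the various powers of $p$. Throughout I would write $v = v_{p}(j)$ and $w = v_{p}(r)$, so that $m = \max\{n-1-v-w,\,0\}$. First I would isolate the key arithmetic fact. Since $\delta$ has order $p^{n-1}$ in $\Aut{N}$, the unit $d$ has order $p^{n-1}$ in $(\mathbb{Z}/p^{n}\mathbb{Z})^{\times}$; as this is a nontrivial power of $p$ and the order of $d$ modulo $p$ divides $\gcd(p^{n-1},p-1)=1$, we get $d \equiv 1 \pmod{p}$, and the order formula $p^{\,n-v_{p}(d-1)}$ then forces $v_{p}(d-1)=1$. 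Here the hypothesis that $p$ is odd is used, via the lifting-the-exponent identity $v_{p}(d^{p^{a}}-1) = a+1$ for all $a \geq 0$. From this it follows at once that $d$ has order exactly $p^{k-1}$ modulo $p^{k}$ for each $1 \leq k \leq n$.

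Next I would compute the stabilizer. One has $i \in \mathrm{Stab}(j)$ precisely when $j(d^{ir}-1) \equiv 0 \pmod{p^{n}}$; dividing out $p^{v}$ (the cofactor of $j$ being a unit) this is equivalent to $d^{ir} \equiv 1 \pmod{p^{\,n-v}}$. By the arithmetic fact the order of $d$ modulo $p^{\,n-v}$ is $p^{\,n-v-1}$, so the congruence holds if and only if $p^{\,n-v-1} \mid ir$, i.e. $v_{p}(i) \geq n-v-1-w$, i.e. $p^{m} \mid i$. Hence $\mathrm{Stab}(j) = \langle p^{m} \rangle$. The degenerate cases, namely $j=0$ or $v \geq n-1$, are absorbed by the clamp $\max\{\,\cdot\,,0\}$ in the definition of $m$: there the congruence $d^{ir}\equiv 1$ holds for every $i$, the stabilizer is all of $G$, and indeed $m=0$ so $\langle p^{m}\rangle = G$.

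For the orbit I would observe that $jd^{ir} = jd^{i'r}$ if and only if $j\bigl(d^{(i-i')r}-1\bigr) \equiv 0 \pmod{p^{n}}$, which by the previous step holds exactly when $i - i' \in \mathrm{Stab}(j) = \langle p^{m}\rangle$. Thus letting $i$ range over $0 \leq i < p^{m}$ realizes each orbit element exactly once, giving $\mathcal{O}(j) = \{\, jd^{ir} : 0 \leq i < p^{m} \,\}$ of cardinality $p^{m}$ (consistently with the orbit--stabilizer count $|G|/|\mathrm{Stab}(j)| = p^{n}/p^{\,n-m} = p^{m}$).

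The main obstacle is the arithmetic fact in the first step: the entire argument hinges on knowing the \emph{exact} order of $d$, hence of $d^{r}$, modulo each power $p^{k}$, and this rests on the clean valuation identity $v_{p}(d^{p^{a}}-1)=a+1$, which is precisely where oddness of $p$ enters. Once that identity and the resulting order $p^{k-1}$ modulo $p^{k}$ are established, the stabilizer and orbit statements follow by routine bookkeeping with the valuations $v$ and $w$.
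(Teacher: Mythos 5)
Your proof is correct and takes essentially the same route as the paper's: both reduce the stabilizer condition to the congruence $d^{ir} \equiv 1 \pmod{p^{\,n-v_p(j)}}$ and then use the fact that $d$ has order exactly $p^{k-1}$ modulo $p^{k}$ to convert this into the divisibility $p^{\,n-1-v_p(j)} \mid ir$, from which the orbit description follows. The only difference is one of detail: you explicitly justify that order fact (via $d \equiv 1 \pmod{p}$, $v_p(d-1)=1$, and lifting-the-exponent for odd $p$) and spell out why the $p^{m}$ listed orbit elements are distinct, whereas the paper invokes the order fact implicitly and states that the orbit computation follows immediately.
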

\begin{proof}  Note $i\cdot_{r} j = j$ if and only if 
\[
 jd^{{ir}} \equiv j \pmod{p^n}
\]
i.e.,
\[d^{{ir}} \equiv 1 \pmod{p^{n-v_p(j)}}, \]
which holds if and only if
\[{ir} \equiv 0 \pmod{p^{n-v_p(j)-1}}.\]
Now if $m=0$ then $v_p(r)=n-1-v_p(j)$, hence $p^{n-1-v_p(j)}\mid r$ and the result is clear. Otherwise, the above congruence holds if and only if \[i\equiv 0 \pmod{p^{n-1-v_p(j)-v_p(r)}}.\]
Thus, $\mbox{Stab}(j)=\langle p^{n-1-v_p(j)-v_p(r)} \rangle $. The orbit computation follows immediately.
\end{proof}

The following allows us to count orbit classes in the cases $m>0$.

\begin{lemma} \label{lemma_cyclic_orbits_m>0}
Let $0 < r \leq p^{n-1}$ and pick, if possible, $0 < m \leq n-1-v_p(s)$. Then $\mathbb{Z}/p^n\mathbb{Z}$ has $p^{v_p(r)}(p-1)$ distinct orbits whose stabilizer is $ \langle p^{m} \rangle $.
\end{lemma}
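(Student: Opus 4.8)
The plan is to extract everything from Lemma \ref{lemma_cyclic_orbits}, which records the stabilizer of an arbitrary $ j \in \mathbb{Z}/p^n\mathbb{Z} $ under the action $ i \cdot_r j = jd^{ir} $ in terms of $ v_p(j) $ and $ v_p(r) $. The whole computation reduces to a valuation count, so I do not expect a genuine obstacle; the only thing requiring care is to see that prescribing the stabilizer $ \langle p^m \rangle $ with $ m>0 $ pins down $ v_p(j) $ to a single value, and that the relevant elements then assemble into full orbits of a common size.

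First I would note that, since we are in the regime $ m > 0 $, the maximum appearing in Lemma \ref{lemma_cyclic_orbits} is attained by its first argument. Hence $ \mbox{Stab}(j) = \langle p^m \rangle $ holds if and only if $ m = n-1-v_p(j)-v_p(r) $, equivalently $ v_p(j) = n-1-m-v_p(r) =: k $. The standing hypothesis $ 0 < m \le n-1-v_p(r) $ is exactly what guarantees $ 0 \le k \le n-2-v_p(r) $, so that elements of valuation $ k $ genuinely exist; this is what the phrase ``if possible'' is protecting against (namely the degenerate case $ v_p(r) = n-1 $, in which no admissible $ m $ exists).

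Next I would count the elements of $ \mathbb{Z}/p^n\mathbb{Z} $ of valuation exactly $ k $. These are the elements $ p^k u $ with $ u $ ranging over a set of representatives for $ (\mathbb{Z}/p^{n-k}\mathbb{Z})^\times $, so there are $ \varphi(p^{n-k}) = p^{n-k-1}(p-1) $ of them; substituting $ k = n-1-m-v_p(r) $ gives $ p^{m+v_p(r)}(p-1) $ elements with stabilizer $ \langle p^m \rangle $.

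Finally, since the acting group $ \mathbb{Z}/p^n\mathbb{Z} $ is abelian, all elements of a single orbit share one stabilizer, so the set of the elements just counted is a disjoint union of orbits, each of size $ p^n / |\langle p^m \rangle| = p^n/p^{n-m} = p^m $ by the orbit-stabilizer theorem (consistently with the orbit description in Lemma \ref{lemma_cyclic_orbits}). Dividing the element count by the common orbit size yields $ p^{m+v_p(r)}(p-1)/p^m = p^{v_p(r)}(p-1) $ orbits, as claimed.
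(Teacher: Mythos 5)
Your proof is correct and follows essentially the same route as the paper's: identify the elements whose stabilizer is $\langle p^{m}\rangle$ as exactly those with $v_p(j)=n-1-m-v_p(r)$, count them as $p^{m+v_p(r)}(p-1)$, and divide by the common orbit size $p^{m}$ from Lemma \ref{lemma_cyclic_orbits}. The only cosmetic differences are that the paper phrases the element count in terms of order (elements of order $p^{m+v_p(r)+1}$) rather than valuation, and you add the (welcome) explicit justifications that the counted set is a union of orbits and that the hypothesis $0<m\leq n-1-v_p(r)$ is what makes the count nonvacuous.
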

\begin{proof} 
There are $\varphi(p^{m+v_p(r)+1})= p^{m+v_p(r)}(p-1)$ elements of order $p^{m+v_p(r)+1}$ in $\mathbb{Z}/p^n\mathbb{Z}$. Clearly, $j$ has order $p^{m+v_p(r)+1}$ if and only if $v_p(j)=n-m-v_p(r)-1$. Thus, there are $ p^{m+v_p(r)}(p-1)$ choices of $j$ for which $m=n-1-v_p(j)-v_p(r)$. By Lemma \ref{lemma_cyclic_orbits}  there are $p^m$ choices for $j$ in each orbit. Thus, the number of orbits whose stabilizer is $ \langle p^m \rangle $ is
\[ \frac{p^{m+v_p(r)}(p-1)}{p^m} = p^{v_p(r)}(p-1).\]
\end{proof}

For $m=0$ we have

\begin{lemma} \label{lemma_cyclic_orbits_m=0}
Suppose $v_p(j)\geq n-1-v_p(r)$. Then 
\[ {\mathcal O}(j)=\{j\} \mbox{ and } \mbox{Stab}(j)=\mathbb{Z}/p^n\mathbb{Z}. \]
\end{lemma}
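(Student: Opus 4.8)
The plan is to recognize this statement as the degenerate $ m=0 $ case of Lemma \ref{lemma_cyclic_orbits}. First I would observe that the hypothesis $ v_p(j) \geq n-1-v_p(r) $ is precisely the inequality $ n-1-v_p(j)-v_p(r) \leq 0 $, which forces the quantity $ m = \max\{n-1-v_p(j)-v_p(r),0\} $ appearing in Lemma \ref{lemma_cyclic_orbits} to equal $ 0 $. Substituting $ m=0 $ into the conclusions of that lemma gives immediately $ {\mathcal O}(j) = \{ jd^{ir} : 0 \leq i < p^{0} \} = \{ j \} $ and $ \mbox{Stab}(j) = \langle p^{0} \rangle = \langle 1 \rangle = \mathbb{Z}/p^n\mathbb{Z} $, which is the desired conclusion.

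Since the proof of Lemma \ref{lemma_cyclic_orbits} dispatches the case $ m=0 $ somewhat quickly, I would also be prepared to give a direct, self-contained argument. Because $ \mathbb{Z}/p^n\mathbb{Z} $ acts on itself by $ i \cdot_{r} j = jd^{ir} $ through the cyclic group $ G = \langle \sigma \rangle $, and a subgroup of a cyclic group is the whole group as soon as it contains a generator, it suffices to show that $ 1 \in \mbox{Stab}(j) $, i.e. that $ jd^{r} \equiv j \pmod{p^{n}} $. Equivalently, I must show $ j(d^{r}-1) \equiv 0 \pmod{p^{n}} $, that is $ v_p(j) + v_p(d^{r}-1) \geq n $.

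The key step is therefore the computation $ v_p(d^{r}-1) = 1 + v_p(r) $. Here I would use that $ \delta $, and hence $ d $, has order exactly $ p^{n-1} $ in $ (\mathbb{Z}/p^n\mathbb{Z})^{\times} $: writing $ v_p(d-1)=e \geq 1 $, the lifting-the-exponent formula (valid since $ p $ is odd) gives $ v_p(d^{p^{k}}-1) = e+k $, so $ d $ has order $ p^{n-e} $, and comparison with $ p^{n-1} $ forces $ e=1 $. Applying the same formula to $ d^{r} $ yields $ v_p(d^{r}-1) = v_p(d-1)+v_p(r) = 1+v_p(r) $. Substituting this back, the congruence $ jd^{r} \equiv j \pmod{p^{n}} $ holds if and only if $ v_p(j) \geq n-1-v_p(r) $, which is exactly the hypothesis. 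Once $ \mbox{Stab}(j) $ contains the generator $ 1 $ it equals all of $ \mathbb{Z}/p^n\mathbb{Z} $, and the orbit is then forced to be the singleton $ \{ j \} $.

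I expect no serious obstacle: the statement is essentially a corollary of Lemma \ref{lemma_cyclic_orbits}, and the only ingredient requiring any care is the standard valuation identity $ v_p(d^{r}-1)=1+v_p(r) $, which relies on $ p $ being odd (so that lifting the exponent applies cleanly) and on $ d $ having full $ p $-power order $ p^{n-1} $.
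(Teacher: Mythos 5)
Your proposal is correct and takes the same route as the paper, whose entire proof is that the claim is immediate from Lemma \ref{lemma_cyclic_orbits} with $m=0$ (the paper only adds, for later use, the count that there are $p^{1+v_p(r)}$ such $j$). Your supplementary self-contained argument via $v_p(d^r-1)=1+v_p(r)$ (lifting the exponent, using that $d$ has order $p^{n-1}$ so $v_p(d-1)=1$) is also sound, though not needed.
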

\begin{proof} Immediate from Lemma \ref{lemma_cyclic_orbits} since $m=0$. Note additionally that there are $p^{1+v_p(r)}$ such $j$ since 
\[\{j: v_p(j)\geq n-1-v_p(r) \}=\{j' p^{n-1-v_p(r)} : 0\leq j^{\prime} < p^{1+v_p(r)} \}. \]
\end{proof}

Having computed orbits and stabilizers, we are now able to give Wedderburn-Artin decompositions. 

\begin{theorem} \label{theorem_cyclic_p_power_wedderburn}
There is an isomorphism of $ K $-algebras
\[H_{r} \cong K^{p^{1+v_p(r)}} \times \prod_{m=1}^{ n-1-v_p(r)}\left( K_{m} \right)^{p^{v_p(r)}(p-1)}.\]
\end{theorem}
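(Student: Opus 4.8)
The plan is to assemble the three orbit-counting lemmas just established into the Bley--Boltje decomposition. First I would invoke \cite[Lemma 2.5]{BleyBoltje}, which writes $H_{r} = L[\alpha_{r}(N)]^{G}$ as a product $\prod_{m} L^{S_{m}}$ of fixed fields, where the $S_{m}$ are the stabilizers of a set of orbit representatives for the action of $G$ on $\widehat{\alpha_{r}(N)}$. By Corollary \ref{corollary_algebra_isomorphism_cyclic}, since $N$ is cyclic and $\zeta \in K$, these stabilizers agree with the stabilizers of a set of orbit representatives for the action of $G$ on $N$ itself, which we have already translated into the action $i \cdot_{r} j = jd^{ir}$ of $\mathbb{Z}/p^{n}\mathbb{Z}$ on itself.

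Second, I would record the field attached to each type of stabilizer. Under the identification of $\mathbb{Z}/p^{n}\mathbb{Z}$ with $G = \langle \sigma \rangle$, the stabilizer $\langle p^{m} \rangle$ computed in Lemma \ref{lemma_cyclic_orbits} corresponds to the subgroup $\langle \sigma^{p^{m}} \rangle = G_{m} = \Gal{L/K_{m}}$, so the associated factor $L^{G_{m}}$ is exactly $K_{m}$. In particular a singleton orbit, whose stabilizer is all of $G$, contributes the factor $K_{0} = K$.

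Third, I would count the orbits of each type. Lemma \ref{lemma_cyclic_orbits_m=0} supplies $p^{1+v_{p}(r)}$ singleton orbits, each with stabilizer $G$ and hence contributing a copy of $K$, producing the block $K^{p^{1+v_{p}(r)}}$. For each $m$ with $1 \leq m \leq n-1-v_{p}(r)$, Lemma \ref{lemma_cyclic_orbits_m>0} supplies exactly $p^{v_{p}(r)}(p-1)$ orbits with stabilizer $G_{m}$, each contributing a copy of $K_{m}$, producing the block $(K_{m})^{p^{v_{p}(r)}(p-1)}$. Taking the product over all $m$ then yields the stated decomposition.

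The proof has no serious obstacle, since the three orbit lemmas carry all the technical weight; the only points requiring care are the identification $L^{G_{m}} = K_{m}$ of the fixed field of the stabilizer, and the verification that the two families of orbits exhaust $\mathbb{Z}/p^{n}\mathbb{Z}$ without overlap. The latter follows by observing that $m = \max\{n-1-v_{p}(j)-v_{p}(r),0\}$ equals $0$ precisely when $v_{p}(j) \geq n-1-v_{p}(r)$, and otherwise ranges over $1, \ldots, n-1-v_{p}(r)$ as $v_{p}(j)$ ranges over $0, \ldots, n-2-v_{p}(r)$, so each residue is counted exactly once. As a consistency check I would confirm that the total $K$-dimension $p^{1+v_{p}(r)} + \sum_{m=1}^{n-1-v_{p}(r)} p^{v_{p}(r)}(p-1)\,p^{m}$ collapses to $p^{n} = \dim_{K} H_{r}$, using $[K_{m}:K] = p^{m}$.
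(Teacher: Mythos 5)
Your proposal is correct and takes essentially the same route as the paper: reduce via \cite[Lemma 2.5]{BleyBoltje} and Corollary \ref{corollary_algebra_isomorphism_cyclic} to the orbit/stabilizer data of the action $i \cdot_{r} j = jd^{ir}$, then read off one factor $L^{\langle \sigma^{p^{m}} \rangle} = K_{m}$ per orbit using Lemmas \ref{lemma_cyclic_orbits}--\ref{lemma_cyclic_orbits_m=0}. The only difference is cosmetic: the paper disposes of the case $r=0$ separately (where the action is trivial and $H_{0}=K[N]\cong K^{p^{n}}$, so the product over $m$ is empty), whereas you fold it into the general count; your added exhaustion and dimension checks are sound but not strictly needed.
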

\begin{proof} Suppose first that $r=0$. Then $H_{r}=K[N]$, and since $\zeta\in K$ it follows that 
\[H_{p^{n-1}}\cong K^{p^n} = K^{p^{1+v_p(p^{n-1})}}.\]
Now let $0 < r < p^{n-1}$. Pick $0 < m \leq n-1-v_p(r)$. From Lemma \ref{lemma_cyclic_orbits_m>0}, we know that there are $p^{v_p(r)}(p-1)$ distinct orbits whose stabilizer is $ \langle \sigma^{p^m} \rangle $. Since $L^{\langle \sigma^m \rangle }=K_{m}$, the Wedderburn-Artin decomposition contains $p^{v_p(r)}(p-1)$ copies of $K_ {m}$. Therefore,
\[H_{r}\cong K^{p^{1+v_p(r)}} \times \prod_{m=1}^{ n-1-v_p(r)}\left( K_{m} \right)^{p^{v_p(r)}(p-1)} \mbox{ as $ K $-algebras.}\]
\end{proof}

\bibliography{../structure}
\bibliographystyle{plain}
\end{document}